\def\frk{\frak}               % font for "Fraktur"
\def\Phi{{\frk n}}
\def\Phi{{\frk N}}
\def\Lc{{\mathcal L}}
\def\Gc{{\mathcal G}}
\def\Sc{{\mathcal S}}
\def\opn#1#2{\def#1{\operatorname{#2}}} % to make operators
\opn\chara{char} \opn\length{\ell} \opn\pd{pd} \opn\rk{rk}
\opn\projdim{proj\,dim} \opn\injdim{inj\,dim} \opn\rank{rank}
\opn\depth{depth} \opn\grade{grade} \opn\height{height}
\opn\embdim{emb\,dim} \opn\codim{codim}
\opn\Tr{Tr} \opn\bigrank{big\,rank}
\opn\superheight{superheight}\opn\lcm{lcm}
\opn\trdeg{tr\,deg}%\emph{
\opn\reg{reg} \opn\lreg{lreg} \opn\ini{in} \opn\lpd{lpd}
\opn\size{size} \opn\sdepth{sdepth}
\opn\link{link}\opn\fdepth{fdepth}\opn\lex{lex}
\opn\div{div} \opn\Div{Div} \opn\cl{cl} \opn\Cl{Cl}
\opn\Spec{Spec} \opn\Supp{Supp} \opn\supp{supp} \opn\Sing{Sing}
\opn\Ass{Ass} \opn\Min{Min}\opn\Mon{Mon}
\opn\Ann{Ann} \opn\Rad{Rad} \opn\Soc{Soc}
\opn\Im{Im} \opn\Ker{Ker} \opn\Coker{Coker} \opn\Am{Am}
\opn\Hom{Hom} \opn\Tor{Tor} \opn\Ext{Ext} \opn\End{End}
\opn\Aut{Aut} \opn\id{id}
\opn\nat{nat}
\opn\pff{pf}%   \pf exists already
\opn\Pf{Pf} \opn\GL{GL} \opn\SL{SL} \opn\mod{mod} \opn\ord{ord}
\opn\Gin{Gin} \opn\Hilb{Hilb}\opn\sort{sort}
\opn\aff{aff} \opn\con{conv} \opn\relint{relint} \opn\st{st}
\opn\lk{lk} \opn\cn{cn} \opn\core{core} \opn\vol{vol}
\opn\link{link} \opn\star{star}\opn\lex{lex}\opn\set{set}
\opn\gr{gr}
\def\pot#1#2{#1[\kern-0.28ex[#2]\kern-0.28ex]}
\opn\dirlim{\underrightarrow{\lim}}
\opn\inivlim{\underleftarrow{\lim}}
\let\union=\cup
\let\sect=\cap
\let\iso=\cong
\let\Union=\bigcup
\let\to=\rightarrow
\def\Implies{\ifmmode\Longrightarrow \else
        \unskip${}\Longrightarrow{}$\ignorespaces\fi}
\def\implies{\ifmmode\Rightarrow \else
        \unskip${}\Rightarrow{}$\ignorespaces\fi}
\def\iff{\ifmmode\Longleftrightarrow \else
        \unskip${}\Longleftrightarrow{}$\ignorespaces\fi}
\newtheorem{Theorem}{Theorem}[section]
\newtheorem{Lemma}[Theorem]{Lemma}
\newtheorem{Corollary}[Theorem]{Corollary}
\newtheorem{Proposition}[Theorem]{Proposition}
\newtheorem{Example}[Theorem]{Example}
\let\epsilon\varepsilon
\let\kappa=\varkappa
\def\qed{\ifhmode\textqed\fi
      \ifmmode\ifinner\quad\qedsymbol\else\dispqed\fi\fi}
\def\textqed{\unskip\nobreak\penalty50
       \hskip2em\hbox{}\nobreak\hfil\qedsymbol
       \parfillskip=0pt \finalhyphendemerits=0}
\def\dispqed{\rlap{\qquad\qedsymbol}}
\opn\dis{dis}
\def\pnt{{\raise0.5mm\hbox{\large\bf.}}}
\opn\Lex{Lex}
\numberwithin{equation}{section}
\begin{document}

\title {Determinantal Facet Ideals}

\author {Viviana Ene, J\"urgen Herzog, Takayuki Hibi  and Fatemeh Mohammadi}

\address{Viviana Ene, Faculty of Mathematics and Computer Science, Ovidius University, Bd.\ Mamaia 124,
 900527 Constanta, Romania
 \newline \indent Institute of Mathematics of the Romanian Academy, P.O. Box 1-764, RO-014700, Buchaest, Romania} \email{vivian@univ-ovidius.ro}

\address{J\"urgen Herzog, Fachbereich Mathematik, Universit\"at Duisburg-Essen, Campus Essen, 45117
Essen, Germany} \email{juergen.herzog@uni-essen.de}

\address{Takayuki Hibi, Department of Pure and Applied Mathematics, Graduate School of Information Science and Technology,
Osaka University, Toyonaka, Osaka 560-0043, Japan}
\email{hibi@math.sci.osaka-u.ac.jp}

\address{Fatemeh Mohammadi, Mathematical Sciences Research Institute, Berkeley, CA 94720-3840, U.S.A.}\email{fatemeh.mohammadi716@gmail.com}

\begin{abstract}
We consider ideals generated by general sets of $m$-minors of an $m\times n$-matrix of indeterminates. The generators are identified with the facets of an $(m-1)$-dimensional pure simplicial complex. The ideal generated by the minors corresponding to the facets of such a complex is called a determinantal facet ideal. Given a pure simplicial complex $\Delta$,  we discuss the question when the generating minors of its determinantal facet ideal $J_\Delta$  form a Gr\"obner basis and when $J_\Delta$ is a prime ideal.
\end{abstract}

\thanks{
This paper was written while J.\ Herzog and F.\ Mohammadi were staying
at Osaka University.  They were supported by the JST (Japan Science
and Technology Agency) CREST (Core Research for Evolutional Science
and Technology) research project {\em Harmony of Gr\"obner Bases and
the Modern Industrial Society} in the frame of the Mathematics Program
``Alliance for Breakthrough between Mathematics and Sciences''.\\
 The first author was supported by the grant UEFISCDI,  PN-II-ID-PCE- 2011-3-1023.}
\subjclass{13C40, 13H10, 13P10,  05E40}
\keywords{Determinantal rings and ideals, Gr\"obner bases, Simplicial complexes}
\maketitle

\section*{Introduction}
Let $K$ be a field,  $X=(x_{ij})$ be an $m\times n$-matrix of indeterminates and $S=K[X]$ be the polynomial ring over $K$ in the indeterminates $x_{ij}$. We assume that $m\leq n$. Classically the ideals $I_t(X)$ generated by all $t$-minors of $X$ have been considered. Hochster  and  Eagon \cite{Ho} proved that the  rings $S/I_t(X)$ are normal Cohen--Macaulay domains. A standard reference  on the classical theory of determinantal ideals, including the study of the powers of $I_t(X)$ is the book \cite{BV} of Bruns and Vetter. Motivated by geometrical considerations the more general class of ladder determinantal ideals have been considered as well, \cite{A}. A new aspect to the theory of determinantal ideals was introduced by Sturmfels \cite{St90} and Caniglia et al. \cite{CGG} who showed that the $t$-minors of $X$ form a Gr\"obner basis of $I_t(X)$ with respect to any monomial order which selects the diagonals of the minors as leading terms. This technique provides a new proof of the Cohen--Macaulayness of the determinantal rings $S/I_t(X)$ and was subsequently also used to compute important numerical invariants of these rings, including the $a$-invariant, the multiplicity and the Hilbert function, see \cite{BH1}, \cite{CH} and \cite{HT}. An excellent survey on the  theory of determinantal ideals regarding the Gr\"obner basis aspect and with many references to more recent work is the article \cite{BC} by Bruns  and Conca.

Applications in algebraic statistics prompted the study of determinantal ideals generated by quite general classes of minors, including ideals generated by adjacent 2-minors, see \cite{HSS} and \cite{HH}, or ideals generated by an arbitrary set of 2-minors  in a $2 \times n$-matrix \cite{HHH}. Thus one may raise the following questions: given an arbitrary set of minors of $X$,  what can be said about the ideal they generate? When is such an ideal a radical ideal, when is it a prime ideal, what is its primary decomposition, when is it Cohen--Macaulay, what is its Gr\"obner basis? Apart from the classical cases mentioned before, satisfying answers to some of these questions are known for ideals generated by arbitrary sets of $2$-minors of a $2\times n$-matrix of indeterminates. All these ideals are radical, their primary decomposition and their Gr\"obner basis are known, see \cite{HHH}.

The purpose of this paper is to extend some of the results shown in \cite{HHH} to ideals generated by an arbitrary set of maximal minors of an $m\times n$-matrix of indeterminates. For any sequence of integers $1\leq a_1<a_2<\cdots<a_m\leq n$ we denote by $[a_1a_2 \ldots a_m]$ the maximal minor of $X$ with columns $a_1,a_2,\ldots,a_m$. The set of integers $\{a_1,a_2,\ldots,a_m\}$ may be viewed as a facet of a simplex on the vertex set $[n]$. This leads us to the following definition: let $\Delta$ be a pure simplicial complex  on the vertex set $[n]=\{1,\ldots,n\}$ of dimension $m-1$. With each facet $F=\{a_1<a_2<\cdots<a_m\}$ we associate the minor $\mu_F= [a_1a_2 \ldots a_m]$, and call the ideal
\[
J_\Delta=(\mu_F\:\; F\in \mathcal{F}(\Delta))
\]
the {\em determinantal facet ideal} of $\Delta$. Here  $\mathcal{F}(\Delta)$ denotes the set of facets of $\Delta$.

When $m=2$,  $\Delta$ may be identified with a graph $G$ and the $m$-minors are binomials. In that case the determinantal facet ideal coincides with the binomial edge ideal of \cite{HHH}.

In the first section of this paper we answer the question when the maximal minors generating $J_\Delta$ form a Gr\"obner basis of $J_\Delta$. In order to explain this result, we have to introduce some notation. Let $\Gamma$ be a simplicial complex. We denote by $\Gamma^{(i)}$ the $i$-skeleton of $\Gamma$. The simplicial complex  $\Gamma^{(i)}$ is the collection of all simplices of $\Gamma$ whose dimension is at most $i$.

Now let $\Delta$ be a pure $(m-1)$-dimensional  simplicial complex on the vertex set $[n]=\{1,2,\ldots,n\}$. We denote by $\Sc$ the set of  simplices $\Gamma$ with vertices in $[n]$ with $\dim \Gamma\geq m-1$   such that $\Gamma^{(m-1)}\subset \Delta$.   Let $\Gamma_1,\ldots,\Gamma_r$ be the maximal elements in $\Sc$  (with respect to inclusion) and set $\Delta_i=\Gamma_i^{(m-1)}$. Then $\Delta =\Delta_1\union \Delta_2\union\cdots\union \Delta_r$. The simplicial complex whose facets are the   $\Gamma_i$ is called the {\em clique complex} of $\Delta$, the $\Delta_i$ are called the {\em cliques} of $\Delta$ and   $\Delta =\Delta_1\union \Delta_2\union\cdots\union \Delta_r$ the  {\em clique decomposition} of $\Delta$. For example, let $\Delta$ be the $2$-dimensional simplicial complex on the vertex set $[7]$ with the facets
$F_1=\{1,2,3\}, F_2=\{1,2,4\}, F_3=\{1,3,4\}, F_4=\{2,3,4\}, F_5=\{3,4,5\},$ and $F_6=\{5,6,7\}$. Then $\Delta$ has the clique decomposition
$\Delta=\Delta_1\cup\Delta_2\cup\Delta_3$ with $\Delta_1=\Gamma_1^{(2)}$, where $\Gamma_1$ is the $3$-dimensional simplex on the set $[4],$
$\Delta_2=\Gamma_2^{(2)},$ where $\Gamma_2$ is the $2$-dimensional simplex on the set $\{3,4,5\}$, and $\Delta_3=\Gamma_3^{(2)},$ where
$\Gamma_3$ is the $2$-dimensional simplex on the set $\{5,6,7\}.$

 Note that if $m=2,$ that is, $\Delta$ is a graph, then $\Delta_i$ are exactly the cliques of $\Delta$ as they are known in  graph theory and $\Gamma_1,\ldots,\Gamma_r$ are the facets of the clique complex of the graph $\Delta.$

The complex $\Delta$ is called {\em closed} (with respect to the given labeling) if for any two facets $F=\{a_1<\cdots <a_m\}$ and $G=\{b_1<\cdots <b_m\}$ with $a_i=b_i$ for some $i$, the $(m-1)$-skeleton of the simplex on the vertex set $F\cup G$ is contained in $\Delta$. In terms of its clique decomposition, the property of $\Delta$ of being closed can be expressed in the following ways:

\medskip
(1)  $\Delta$ is closed,  if and only if for all $i\neq j$ and all $F=\{a_1<a_2<\cdots <a_m\} \in \Delta_i$ and $G=\{b_1<b_2<\cdots<b_m\}\in \Delta_j$
we have $a_\ell\neq b_\ell$ for all $\ell$.

\medskip
(2) $\Delta$ is closed,  if and only if for all $i\neq j$ and all $\{a_1,\ldots, a_m\}\in\Delta_i$ and  $\{b_1,\ldots, b_m\}\in\Delta_j$, the monomials ${\ini}_<[a_1\ldots a_m]$ and
${\ini}_<[b_1\ldots b_m]$ are relatively prime, where $<$ is the lexicographical order induced by the natural order of indeterminates
\[
x_{11}>x_{12}>\cdots >x_{1n}>x_{21}>\cdots >x_{2n}>\cdots >x_{mn},
\]
row by row from left to right.

The main result (Theorem~\ref{closedfatemeh}) of Section~1 states that the minors generating the facet ideal $J_\Delta$ form a quadratic Gr\"obner basis with respect to the lexicographic order induced by the natural order of the variables, if and only if $\Delta$ is closed. We also show that whenever $\Delta$ is closed, then $J_\Delta$ is Cohen--Macaulay and the $K$-algebra generated by the minors which generate $J_\Delta$ is Gorenstein, see Corollary~\ref{cm} and Corollary~\ref{gorenstein}.

In Section~2 we discuss when a determinantal facet ideal is a prime ideal. As a main result we show in Theorem~\ref{viviana} that if $\Delta$ is closed and $J_\Delta$ is a prime ideal, then the clique complexes $\Delta_i$ of $\Delta$ satisfy the following intersection properties: for all $2\leq t\leq m=\dim\Delta+1$ and for any pairwise distinct cliques $\Delta_{i_1},\ldots,\Delta_{i_t}$ one has
\[
|V(\Delta_{i_1})\cap\cdots \cap V(\Delta_{i_t})|\leq m-t.
\]
We expect that this intersection property actually characterizes  closed simplicial complexes whose determinantal facet ideal is prime, but could not prove it yet.  In Theorem~\ref{fatemehinverse} we can give only a partial converse of Theorem~\ref{viviana}.

We show in Example~\ref{necessary} that primality of  determinantal facet ideals satisfying the above intersection condition can only be expected for closed simplicial complexes. For non-closed simplicial complexes the primality problem seems to be pretty hard.

In Section 3 we study primality of $J_\Delta$ for a closed simplicial complex under the following very strict intersection condition: let $\Delta=\Delta_1\cup\ldots\cup\Delta_r$ be the clique decomposition of $\Delta$. We require  that
\begin{enumerate}
\item[(i)] $|V(\Delta_i)\sect V(\Delta_j)|\leq 1$ for all $i< j$;
\item[(ii)] $V(\Delta_i)\cap V(\Delta_j)\cap V(\Delta_k)=\emptyset$ for all $i<j<k$.
\end{enumerate}
For $m=3$, this is exactly the necessary condition for primality formulated in Theorem~\ref{viviana}.

Assuming (i) and (ii), we let $G_\Delta$ be the simple graph  with the vertices $v_1,\ldots,v_r$, and
edges $\{v_i,v_j\}$ for all $i\neq j$ with  $V(\Delta_i)\sect V(\Delta_j)\neq \emptyset$.  The question arises for which graphs $G_\Delta$ the determinantal facet ideal $J_\Delta$ is a prime ideal. This is the case when  $\Delta$ is closed and $G_\Delta$ is a forest or a cycle, see Theorem~\ref{tree} and Theorem~\ref{cycle}.
Finally we show in Theorem~\ref{planar} that   for any graph $G$ there is a closed simplicial complex $\Delta$  with $G=G_\Delta$ whose  cliques are all simplices.

\section{Determinantal facet ideals whose generators form a Gr\"obner basis}

In this section we intend to classify those ideals generated by maximal minors of a generic $m\times n$-matrix $X$ whose generating minors form a Gr\"obner basis. As explained in the introduction we identify each $m$-minor $[a_1a_2 \ldots a_m]$ of $X$ with the $(m-1)$-simplex $F=\{a_1,a_2,\ldots,a_m\}$. Thus an arbitrary collection of $m$-minors of $X$ can be indexed by the facets of a pure $(m-1)$-dimensional simplicial complex $\Delta$ on the vertex set $[n]$. The ideal generated by these minors will be denoted $J_\Delta$, and is called the {\em determinantal facet ideal} of $\Delta$. In other words, if $\mathcal{F}(\Delta)$ denotes the set of facets of $\Delta$, then $J_\Delta=(\mu_F\:\; F\in \mathcal{F}(\Delta))$,  where  $\mu_F=[a_1a_2\cdots a_m]$ for $F=\{a_1,a_2,\ldots,a_m\}$.

In analogy to the case of 2-minors, as considered in \cite{HHH}, we say that  $\Delta$ is  {\em closed with respect to the given labeling} if for any two facets $F=\{a_1<\cdots <a_m\}$ and $G=\{b_1<\cdots <b_m\}$ with $a_i=b_i$ for some $i$, the $(m-1)$-skeleton of the simplex on the vertex set $F\cup G$ is contained in $\Delta$. $\Delta$ is called {\em closed} if there is a labeling of its vertices such that $\Delta$ is closed with respect to it.

For example, let $\Delta$ be the $2$-dimensional simplicial complex of Figure~\ref{Fig1} (a). The cliques of $\Delta$ are two simplices of
dimension $2.$.

\begin{figure}[h]
\begin{center}
\psset{unit=0.5cm}
\begin{pspicture}(-8.3,-2.5)(4,3.5)

\psline(-12,2)(-12,-2)
\psline(-12,-2)(-10,0)
\psline(-10,0)(-12,2)
\psline(-10,0)(-8,-2)
\psline(-8,-2)(-8,2)
\psline(-10,0)(-8,2)
\rput(-10,-3){(a)}

\psline(-4,2)(-4,-2)
\psline(-4,-2)(-2,0)
\psline(-2,0)(-4,2)
\psline(-2,0)(0,-2)
\psline(0,-2)(0,2)
\psline(-2,0)(0,2)
\rput(-2,-3){(b)}
\rput(-4.5,-2){$1$}
\rput(-4.5,2){$2$}
\rput(-2,-0.7){$3$}
\rput(0.5,-2){$5$}
\rput(0.5,2){$4$}

\psline(4,2)(4,-2)
\psline(4,-2)(6,0)
\psline(6,0)(4,2)
\psline(6,0)(8,-2)
\psline(8,-2)(8,2)
\psline(6,0)(8,2)
\rput(6,-3){(c)}
\rput(3.5,-2){$2$}
\rput(3.5,2){$1$}
\rput(6,-0.7){$5$}
\rput(8.5,-2){$4$}
\rput(8.5,2){$3$}

\end{pspicture}
\end{center}
\caption{}\label{Fig1}
\end{figure}
 $\Delta$ is closed with respect to the labeling given in Fi\-gure~\ref{Fig1} (b), but it is not closed
with respect to the labeling given in Figure~\ref{Fig1} (c). Indeed, with respect to the first labeling, the facets $\{1,2,3\}$ of the first clique
and $\{3,4,5\}$ of the second clique do not have a common label in the same position, while with respect to the second labeling the facets
$\{1,2,5\}$ and $\{3,4,5\}$ have the label $5$ in the last position. In terms of initial monomials, in the first case
$\ini_<[123]=x_{11}x_{22}x_{33}$ and $\ini_<[345]=x_{13}x_{24}x_{35}$ are relatively prime, while in the second case $\ini_<[125]=x_{11}x_{22}x_{35}$
and $\ini_<[345]=x_{13}x_{24}x_{35}$ are not relatively prime. However the simplicial complex is closed since one may find a labeling of its vertices
with respect to which $\Delta$ is closed.

\medskip
The main result of this section is

\begin{Theorem}
\label{closedfatemeh}
The set $\Gc=\{[a_1\ldots a_m]:\; \{a_1,\ldots,a_m\}\in \Delta\}$ is a Gr\"obner basis of $J_\Delta$ with respect to the lexicographical order induced by the natural order of indeterminates if and only if $\Delta$ is closed.
\end{Theorem}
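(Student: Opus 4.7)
The plan is to verify both implications via Buchberger's criterion, leveraging as a black box the classical theorem of Sturmfels \cite{St90} and Caniglia--Gallinaro--Gentile \cite{CGG}: for any subset $S\subseteq[n]$ with $|S|\geq m$, the maximal minors of the submatrix $X_S$ form a Gr\"obner basis of $I_m(X_S)$ under the lexicographic order induced by the natural order of the variables. Writing $F=\{a_1<\cdots<a_m\}$ and $G=\{b_1<\cdots<b_m\}$, the leading monomials of $\mu_F$ and $\mu_G$ are the diagonal products ${\ini}_<(\mu_F)=x_{1a_1}\cdots x_{ma_m}$ and ${\ini}_<(\mu_G)=x_{1b_1}\cdots x_{mb_m}$.

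\emph{Sufficiency.} Let $F,G\in\mathcal{F}(\Delta)$. If ${\ini}_<(\mu_F)$ and ${\ini}_<(\mu_G)$ are coprime, then $S(\mu_F,\mu_G)$ reduces to $0$ by Buchberger's first criterion. Otherwise $a_i=b_i$ for some $i$, and the closedness of $\Delta$ forces every $m$-subset of $F\cup G$ to be a facet of $\Delta$, so every maximal minor of the submatrix $X_{F\cup G}$ lies in $\Gc$. Since $S(\mu_F,\mu_G)\in I_m(X_{F\cup G})$, the classical theorem delivers a reduction of $S(\mu_F,\mu_G)$ to $0$ modulo the maximal minors of $X_{F\cup G}$, and hence a fortiori modulo $\Gc$.

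\emph{Necessity.} I argue by contrapositive. Assume there exist $F,G\in\mathcal{F}(\Delta)$ with $a_i=b_i$ for some $i$ and an $m$-subset $H\subseteq F\cup G$ with $H\notin\mathcal{F}(\Delta)$. The task is to produce a polynomial in $J_\Delta$ whose initial monomial lies outside the monomial ideal $M:=({\ini}_<(\mu_{F'}):F'\in\mathcal{F}(\Delta))$. I focus first on the minimal configuration $|F\cap G|=m-1$ with both non-common elements $a<b$ larger than every element of the common part $C=F\cap G=\{c_1<\cdots<c_{m-1}\}$. Laplace expansion of $\mu_F$ and $\mu_G$ along the last row, combined with the identity $\mu_{H^*}=\sum_{j=1}^{m-2}(-1)^{m+j}x_{m,c_j}\mu_{H^*}^{(j)}-x_{m,a}\mu_G^{(m-1)}+x_{m,b}\mu_F^{(m-1)}$ for $H^*:=(C\setminus\{c_{m-1}\})\cup\{a,b\}$, shows that after the two leading diagonals of $S(\mu_F,\mu_G)=x_{m,b}\mu_F-x_{m,a}\mu_G$ cancel, its leading monomial equals $x_{1c_1}\cdots x_{m-2,c_{m-2}}x_{m-1,a}x_{m,c_{m-1}}x_{m,b}=x_{m,c_{m-1}}\cdot{\ini}_<(\mu_{H^*})$. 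A quick divisibility check (row $j$ pins $f'_j=c_j$ for $j\leq m-2$, row $m-1$ pins $f'_{m-1}=a$, and the order constraint $f'_{m-1}<f'_m$ rules out $f'_m=c_{m-1}$) shows that $H^*$ is the unique $m$-subset of $[n]$ whose diagonal divides this monomial. Hence $H^*\notin\mathcal{F}(\Delta)$ immediately places ${\ini}_<(S(\mu_F,\mu_G))$ outside $M$, contradicting the Gr\"obner basis property of $\Gc$.

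\emph{Main obstacle.} The delicate point is to arrange the non-closedness witness $(F,G)$ so that the uniquely determined offending subset $H^*$ is indeed missing from $\Delta$: a given witness pair satisfies neither $|F\cap G|=m-1$ nor the condition $H=H^*$ automatically. I plan to address this by a descent argument: starting from any non-closedness witness and successively replacing $F$ or $G$ by facets of $\Delta$ sharing $m-1$ vertices inside the subcomplex spanned by $F\cup G$, until reaching the minimal configuration that exposes a missing $H^*$. Making this walk precise---tracing a path in the adjacency graph on the facets of $\Delta$ restricted to $F\cup G$---is the most delicate part, potentially requiring an induction on $|F\cup G|$ or on a lex-type invariant of the witness pair.
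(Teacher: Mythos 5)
Your sufficiency argument is correct and is exactly the paper's: coprime leading terms when $a_i\neq b_i$ for all $i$, and otherwise $S(\mu_F,\mu_G)\in I_m(X_{F\cup G})$, all of whose maximal minors lie in $\Gc$ by closedness, so the classical Gr\"obner basis theorem for maximal minors gives a standard expression. Your local computation for necessity is also sound as far as it goes: for $F=C\cup\{a\}$, $G=C\cup\{b\}$ with $a<b$ both exceeding $\max C$, the leading monomial of $S(\mu_F,\mu_G)$ is indeed $x_{m,c_{m-1}}\cdot{\ini}_<(\mu_{H^*})$, and $H^*=(C\setminus\{c_{m-1}\})\cup\{a,b\}$ is the unique $m$-subset whose diagonal divides it, so that single set is forced to be a facet.

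The necessity direction nevertheless has a genuine gap, and it is the hard part of the theorem. First, even in the configuration you treat, closedness requires \emph{all} $m-1$ sets $(C\setminus\{c_j\})\cup\{a,b\}$ to be facets, whereas your one S-pair certifies only $H^*$ (the case $j=m-1$); the remaining ones need further relations, which the paper extracts from Lemma~\ref{det} with varying choices of the deleted column $a_r$ and of the row set ($[1\ldots m-1]$ versus $[2\ldots m]$), and then by pairing the newly obtained facets against each other. Second, and more seriously, the proposed descent from an arbitrary witness $(F,G,H)$ to your minimal configuration is circular as stated: walking from $F$ to $G$ through facets of $\Delta$ sharing $m-1$ vertices inside $F\cup G$ presupposes that such intermediate facets already belong to $\Delta$, which is essentially the conclusion being sought, and a ``minimal'' witness in your sense need not exist --- a priori every adjacent pair in special position could have its $H^*$ present while some other $m$-subset of some $F\cup G$ is missing. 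The paper avoids this by arguing in the direct direction with a descending induction on $k=|F\cap G|$: for $k<m-1$ it computes the leading term of $[s{+}1\ldots m|b_{s+1}\ldots b_m]\mu_F-[s{+}1\ldots m|a_{s+1}\ldots a_m]\mu_G$, reads off from divisibility a facet $\{c_1,\ldots,c_m\}$ of $\Delta$ meeting $F$ or $G$ in more than $k$ elements, and bootstraps through the inductive hypothesis with a further case analysis on which positions agree. None of this is present in your proposal, so the ``if'' half of the theorem (that a Gr\"obner basis forces closedness) remains unproven; you would need to either carry out a non-circular version of your walk or adopt an induction on $|F\cap G|$ as the paper does.
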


Before proving the above theorem, we introduce the following notation which is often used in the classical determinantal ideal theory.
If $r<m,$ the minor corresponding to the submatrix of $X$ with rows $a_1,\ldots,a_r$ and columns $b_1,\ldots,b_r$ is denoted by
$[a_1\ldots a_r| b_1\ldots b_r].$
For the proof of Theorem~\ref{closedfatemeh} we need the following technical result.

\begin{Lemma}
\label{det}
Let $m\leq n-1.$ For any $m-1$ rows $c_1,c_2,\ldots,c_{m-1}$ and $m+1$ columns $d_1,d_2,\ldots,d_{m-2},e_1, e_2,e_3$ of $X$ one has the following identity:
\begin{eqnarray*}
(-1)^{k}[c_1\ldots c_{m-1}| d_1\ldots d_{m-2}e_3][d_1\ldots d_{m-2}e_1e_2]&\\
+(-1)^j[c_1\ldots c_{m-1}| d_1\ldots d_{m-2}e_2][d_1\ldots d_{m-2}e_1e_3]&\\
+(-1)^i[c_1\ldots c_{m-1}| d_1\ldots d_{m-2}e_1][d_1\ldots d_{m-2}e_2e_3]=0,
\end{eqnarray*}
provided  that $d_1<d_2<\cdots< d_{i-1}<e_1<d_{i}<\cdots <d_{j-2}<e_2<d_{j-1}<\cdots<d_{k-3}<e_3<d_{k-2}<\cdots<d_{m-2}$ for some $1\leq i<j<k\leq m.$
\end{Lemma}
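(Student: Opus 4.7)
The plan is to derive the identity as a cofactor expansion of a determinant that vanishes because it has linearly dependent rows.

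I would let $V$ denote the $m\times(m+1)$ submatrix of $X$ consisting of all $m$ rows and the columns indexed by $D=\{d_1,\ldots,d_{m-2},e_1,e_2,e_3\}$ listed in their natural (sorted) order, and write $\gamma_\ell$ for the $\ell$-th entry of $D$, so that $\gamma_i=e_1$, $\gamma_j=e_2$, and $\gamma_k=e_3$. I would then define the row vector $w=(w_1,\ldots,w_{m+1})$ by
\[
w_\ell \;=\; [c_1\ldots c_{m-1}\mid d_1,d_2,\ldots,d_{m-2},\gamma_\ell].
\]

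The crucial observation is that the cofactor expansion of $[c_1\ldots c_{m-1}\mid d_1,\ldots,d_{m-2},\alpha]$ along its last column writes it as $\sum_{r=1}^{m-1}\lambda_r\,x_{c_r,\alpha}$, where the coefficients $\lambda_r$ depend on $c_1,\ldots,c_{m-1}$ and $d_1,\ldots,d_{m-2}$ but not on $\alpha$. Consequently $w = \sum_{r=1}^{m-1}\lambda_r\cdot(\text{row }c_r\text{ of }V)$ lies in the row span of $V$, so the $(m+1)\times(m+1)$ block matrix $\bigl(\begin{smallmatrix}w\\ V\end{smallmatrix}\bigr)$ has vanishing determinant. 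Laplace expansion along its first row then gives
\[
0 \;=\; \sum_{\ell=1}^{m+1}(-1)^{\ell+1}\,w_\ell\,\mu_{\hat\ell},
\]
where $\mu_{\hat\ell}$ denotes the maximal $m$-minor of $V$ obtained by deleting column $\ell$.

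To finish, I note that whenever $\gamma_\ell\in\{d_1,\ldots,d_{m-2}\}$ the minor $w_\ell$ has a repeated column and therefore vanishes, so only the three terms with $\ell\in\{i,j,k\}$ contribute. The surviving maximal minors are $\mu_{\hat i}=[d_1\ldots d_{m-2}e_2e_3]$, $\mu_{\hat j}=[d_1\ldots d_{m-2}e_1e_3]$, and $\mu_{\hat k}=[d_1\ldots d_{m-2}e_1e_2]$. Multiplying the resulting three-term identity through by $-1$ produces exactly the relation stated in the lemma, with signs $(-1)^i$, $(-1)^j$, $(-1)^k$ on the three summands. The one point that needs a little care is the sign bookkeeping: the first factors are to be read with columns in the written order $d_1,\ldots,d_{m-2},e_\ell$ (which is how the Laplace expansion naturally produces them), while the second factors are the paper's sorted maximal minors attached to the underlying column sets $\{d_1,\ldots,d_{m-2},e_a,e_b\}$.
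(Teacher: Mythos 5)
Your proposal is correct and is essentially the paper's own argument: the paper forms the same $(m+1)\times(m+1)$ matrix (with the row of minors $g_\ell=[c_1\ldots c_{m-1}\mid d_1\ldots d_{m-2}\,\ell]$ placed at the bottom rather than the top), observes via cofactor expansion along the last column that this row is a linear combination of rows $c_1,\ldots,c_{m-1}$, notes that the entries over repeated columns vanish, and obtains the identity by Laplace expansion along that row. The only difference is the cosmetic placement of the extra row, which changes the expansion signs by a global factor and hence not the identity.
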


\begin{proof}
Our assumption on the sequence of integers means that $e_1$ is the $i^{\rm th}$ term,
$e_2$   the $j^{\rm th}$ term, and $e_3$  the $k^{\rm th}$ term of the above sequence.

Now consider the matrix
\[
M=\left(
   \begin{array}{cccccccccc}
     x_{1 d_1}& \ldots & x_{1 d_{i-1}}& x_{1 e_1} &\ldots& x_{1 e_2}& \ldots & x_{1 e_3}& \ldots &x_{1 d_{m-2}} \\
          \vdots &  & \vdots & \vdots & & \vdots  & & \vdots & & \vdots         \\
 x_{m d_1}& \ldots & x_{m d_{i-1}}& x_{m e_1} &\ldots& x_{m e_2}& \ldots & x_{m e_3}& \ldots &x_{m d_{m-2}} \\
 g_{d_1} & \ldots & g_{d_{i-1}} & g_{e_1} &\ldots& g_{e_2}& \ldots & g_{e_3}& \ldots &g_{d_{m-2}} \\
        \end{array}
 \right),
\]
where $g_\ell$ is the minor $[c_1\ldots c_{m-1}|d_1\ldots d_{m-2} \ell]$ of $X$ for each  $\ell\in \{d_1,d_2,\ldots, d_{m-1},e_1,e_2,e_3\}$. Expanding $g_\ell$ by
the last column we get
$$g_\ell=\sum_{i=1}^{m-1}(-1)^{m-1+i}[c_1\ldots c_{{i-1}}c_{i+1}\ldots c_{m-1}|d_1\ldots d_{m-2}] x_{c_i\ell}$$ for each $\ell$. Therefore  the last row of $M$
is a linear combination of the rows $c_1,\ldots,c_{m-1}$ of $M$. Hence  the determinant of $M$ is zero. On the other hand, $g_\ell=0$ for $\ell=d_1,\ldots,d_{m-2}$, since for these $\ell$ the polynomial $g_\ell$ is the determinant of a matrix with two equal columns. Now computing the determinant of $M$ by expanding its last row we obtain the desired identity.
\end{proof}

\begin{proof}[Proof of Theorem~\ref{closedfatemeh}]
Assume that $\Delta$ is closed. We show that all $S$-pairs, $$S([a_1\ldots a_m],[b_1\ldots b_m])$$ reduce
to zero. If $a_i\neq b_i$ for all $i$, then  $\text{in}_<[a_1\ldots a_m]$ and $\text{in}_<[b_1\ldots b_m]$  have no common factor. Therefore $S([a_1\ldots a_m],[b_1\ldots b_m])$ reduces to zero.

Let $a_i=b_i$ for some $i$. Since $\Delta$ is closed, all $m$-subsets of $\{a_1,\ldots,a_m\}\cup\{b_1,\ldots,b_m\}$ belong to $\Delta$. Therefore  $S([a_1\ldots a_m],[b_1\ldots b_m])$ reduces
to zero with respect to the $m$-subsets of $\{a_1,\ldots,a_m\}\cup\{b_1,\ldots,b_m\}$, and hence with respect to $\Gc$. Then by using Buchberger's criterion, it follows that $\Gc$ is a Gr{\"o}bner basis of $J_\Delta$.

\medskip

Assume that $\Gc$ is a Gr{\"o}bner basis for the ideal $J_\Delta$.  Let  $[a_1a_2\ldots a_m]$  with $a_1<a_2<\cdots <a_m$ and $[b_1b_2\ldots b_m]$ with
$b_1<b_2<\cdots <b_m$  belong to $\Gc$, and assume  that  $a_i=b_i$ for some $i$.

We will show that $\Delta$ is closed. The proof is by descending induction on $$k=|\{a_1,\ldots,a_m\}\cap\{b_1,\ldots,b_m\}|.$$

First assume that $k=m-1$. Then there exists an integer $\ell$ such that $a_1=b_1, \ldots,a_{\ell-1}=b_{\ell-1}$ and $a_\ell\neq b_\ell$. We may assume $b_\ell<a_\ell$. Then
\[
\{b_1<\ldots<b_m\}=\{a_1<a_2<\cdots <a_{\ell-1}<b_\ell<a_\ell<\cdots <a_{\ell'-1}<a_{\ell'+1}<\cdots <a_m\}
\]
for some $\ell'\geq \ell$.

In order to prove that  in this case $\Delta$ is closed we have to show that
\[
\{a_1,\ldots,a_m,b_\ell\}\setminus \{a_r\}\in \Delta
\]
for all $r$.

Since $a_i=b_i$ for some $i$ we have that either $\ell'<m$ or $1<\ell$. We first  assume that $\ell'< m$, and choose an integer $r$ with $\ell'<r\leq m$.

Then we  use the determinantal identity of Lemma~\ref{det} for $\{d_1<\cdots<d_{m-2}\}$ equal to
\[
\{a_1<\cdots <a_{\ell-1}<a_{\ell}<\cdots <a_{\ell'-1}< a_{\ell'+1}< \cdots <a_{r-1} <a_{r+1}< \cdots < a_m\}
\]
and $\{e_1<e_2<e_3\}=\{b_\ell< a_{\ell'}<a_r\}$,
  and obtain
\begin{eqnarray*}
&(-1)^{\ell'+1}&[1 \ldots m-1|a_1\ldots \hat{a}_{\ell'}\ldots a_m ][a_1 \ldots a_{\ell-1}b_\ell a_\ell\ldots  \hat{a}_r \ldots a_{m}]\\
&+(-1)^{r+1}&[1 \ldots m-1|a_1\ldots \hat{a}_{r} \ldots a_m][b_1\ldots b_m]\\
&+(-1)^{\ell}&[1\ldots m-1|a_1\ldots a_{\ell-1}b_\ell a_\ell\ldots \hat{a}_{\ell'} \ldots \hat{a}_{r}\ldots a_m ][a_1 \ldots a_m]=0.
\end{eqnarray*}

Since the last two terms are in $J_\Delta$ and $\Gc$ is a Gr\"obner basis for $J_\Delta$, the initial monomial  of the first term is divisible by the initial monomial of a minor in $\Gc$.

The initial monomial of the first term is
\begin{eqnarray*}
u=(x_{1a_1}&\cdots& x_{\ell'-1 a_{\ell^\prime-1}} x_{\ell^\prime a_{\ell^\prime+1}} \cdots x_{m-1 a_m})\\
&&\times (x_{1 a_1} \cdots x_{\ell-1 a_{\ell-1}} x_{\ell b_\ell} x_{\ell+1 a_{\ell}}    x_{\ell+2 a_{\ell+1}} \cdots x_{r a_{r-1}} x_{r+1 a_{r+1}}\cdots x_{m a_m}).
\end{eqnarray*}
It follows that  ${\rm in_<}[a_1 \ldots a_{\ell-1} b_\ell a_\ell \ldots  \hat{a}_{r} \ldots a_m]$
is the only initial monomial of a maximal minor of $X$ which divides the above monomial. Indeed, to find the initial monomial of a maximal minor
which divides $u$ we need to choose an increasing subsequence of $a_1< \cdots <a_{\ell-1}<b_\ell<a_\ell<a_{\ell+1}<\cdots< a_m $ with $m$ elements. Note that
for the first $\ell-1$ and the last $m-r$ elements we have a unique choice, namely $a_1<\cdots <a_{\ell-1}$ and, respectively, $a_{r+1}<\cdots <a_m.$ Hence we have to choose a subsequence with $r-\ell+1$ elements of $b_\ell<a_\ell<a_{\ell+1}<\cdots< a_r.$ Now we observe that $x_{ra_r}$ does not divide $u$, hence we  cannot keep $a_r$ in the above sequence. Therefore, the unique choice of the subsequence is $b_\ell<a_\ell<a_{\ell+1}<\cdots< a_{r-1}$.

Hence we deduce that
\[
[a_1 \ldots a_{\ell-1} b_\ell a_\ell \ldots  \hat{a}_{r} \ldots a_m]\in \Gc
\]
and so $\{a_1, \ldots, a_{\ell-1}, b_\ell, a_\ell, \ldots,  \hat{a}_{r}, \ldots, a_m\}$ is in $\Delta$ for all $r>\ell'$.

Next we assume that $1<\ell$. Then we deduce as above that $$\{a_1,\ldots,  \hat{a}_{r}, \ldots, a_{\ell-1}, b_\ell, a_\ell,  \ldots, a_m\}$$ is in $\Delta$ for $r<\ell$. More precisely, we use again Lemma~\ref{det}, but for $[c_1\ldots c_{m-1}]=[2\ldots m]$ and get the following identity:

\begin{eqnarray*}
&(-1)^{r}&[2 \ldots m|a_1\ldots \hat{a}_{r}\ldots a_m ][b_1 \ldots  b_{m}]\\
&+(-1)^{\ell-1}&[2 \ldots m|a_1\ldots \hat{a}_{r}\ldots b_\ell a_\ell\ldots \hat{a}_{\ell'} \ldots a_m][a_1\ldots a_m]\\
&+(-1)^{\ell'-1}&[2\ldots m|a_1\ldots  \hat{a}_{\ell'} \ldots  a_m ][a_1 \ldots \hat{a}_{r}\ldots b_\ell a_\ell\ldots a_m]=0.
\end{eqnarray*}

The last term in this identity belongs to $J_\Delta$, thus its initial monomial is divisible by the initial monomial of a minor in $\Gc.$ By using similar arguments as before, we get the claim.

Finally we show that for {\em all} $r$ we have $\{a_1,\ldots,a_m,b_\ell\}\setminus \{a_r\}\in \Delta$. To this end we may assume that $\ell'<m$  and choose $r=\ell'+1$, to obtain by the above arguments that $\{a_1,\ldots,  a_{\ell-1}, b_\ell, a_\ell,  \ldots,  \hat{a}_{\ell'+1}, \ldots,a_m\}$ is a facet  of $\Delta$. Comparing this facet with the facet $\{a_1,\ldots,  a_{\ell-1}, b_\ell, a_\ell,  \ldots,  \hat{a}_{\ell'}, \ldots,a_m\}$ of $\Delta$ it follows from the above considerations that $\{a_1,\ldots,  a_{\ell-1}, b_\ell, a_\ell,  \ldots,a_m\}\setminus \{a_r\}\in \Delta$ for all $r\leq  \ell'$.

\medskip
Assume now that
$
|\{a_1,\ldots,a_m\}\cap\{b_1,\ldots,b_m\}|=k<m-1.
$
Let $s$ be the number of integers $i$ such that $a_i=b_i$.  By our assumption, $s\geq 1$, and of course $s\leq k$.
Assume that  $a_1=b_1,\ldots,a_s=b_s$ and  $a_{s+1}<b_{s+1}$.
Then
\begin{eqnarray*}
&&\ini_<([s+1\ldots m| b_{s+1}\ldots b_m][a_1\ldots a_m]-
[s+1\ldots m| a_{s+1}\ldots a_m][b_1\ldots b_m])\\
&=&(x_{s+1 b_{s+1}}\cdots x_{m b_m})(x_{1a_1}\cdots x_{s-1 a_{s-1}} x_{s a_{s+1}} x_{s+1 a_s} x_{s+2 a_{s+2}}
\cdots x_{ma_m})=u,
\end{eqnarray*}
because the monomials bigger than $u$ in the expression whose initial monomial we compute  cancel.
Therefore
there exists a minor $[c_1\ldots c_m]$ in $\Gc$ with $c_1<c_2<\cdots <c_m$ such that ${\rm in_<}[c_1\ldots c_m]$ divides
the monomial $$(x_{s+1 b_{s+1}}\cdots x_{m b_m})(x_{1a_1}\cdots x_{s-1 a_{s-1}} x_{s a_{s+1}} x_{s+1 a_s} x_{s+2 a_{s+2}}
\cdots x_{ma_m}),$$
and we have
\[
c_1=a_1,\ldots,c_{s-1}=a_{s-1},c_s=a_{s+1},c_{s+1}=b_{s+1},\ {\rm and}\ c_\ell\in\{a_\ell,b_\ell\}\ {\rm for\ }\ell\geq s+2.
\]

First consider the case  $s=k$. Then $c_{m}$ is either $a_{m}$ or $b_{m}$, and we may assume that $c_{m}=a_{m}$.
Therefore $|\{c_1,\ldots,c_m\}\cap \{a_1,\ldots,a_m\}|>k$.
Applying the  inductive hypothesis for the facets $\{c_1,\ldots,c_m\}$ and $\{a_1,\ldots,a_m\}$ of $\Delta$, we conclude that all $m$-subsets of $$\{a_1,\ldots,a_m\}\cup\{c_{1},\ldots,c_m\}$$
belong to $\Delta$.

Note that
there exists some $c_i$ such that $c_i\not\in\{a_1,\ldots,a_m\}$, since $a_s\not\in\{c_1,\ldots,c_m\}$.  It follows that  $c_i=b_i$, and consequently $b_i\not\in \{a_1,\ldots,a_m\}$.
Moreover, since  $k<m-1$  there exist two integers $j_1$ and $j_2$ such that
$$a_{j_1},a_{j_2}\not\in\{b_1,\ldots,b_m\}.$$

Since $\{a_1,\ldots,\hat{a}_{j_1},\ldots,a_m,b_{i}\}$ and  $\{a_1,\ldots,\hat{a}_{j_2},\ldots,a_m,b_{i}\}$ are $m$-subsets of $$\{a_1,\ldots,a_m\}\cup\{c_{1},\ldots,c_m\},$$ these sets belong to $\Delta$.
Now applying the inductive hypothesis to the sets $\{b_1,\ldots, b_m\}$ and
$\{a_1,\ldots,\hat{a}_{j_1},\ldots,a_m,b_{i}\}$ which intersect in $k+1$ elements,  we will get all $m$-subsets of $$\{a_1,\ldots,\hat{a}_{j_1},\ldots,a_m,b_{i}\}\cup\{b_1,\ldots,b_m\}$$ in $\Delta$.
By the same argument we deduce that  all $m$-subsets of $$\{a_1,\ldots,\hat{a}_{j_2},\ldots,a_m,b_{i}\}\cup\{b_1,\ldots,b_m\}$$ belong to $\Delta$.

Now assume that $F$ is an arbitrary subset of $\{a_1,\ldots,a_m\}\cup\{b_1,\ldots,b_m\}$ such that $a_{j_1},a_{j_2}\in F$ and $b_{j}\not\in F$ for some $j$. By the above statements we have
$(F\setminus \{a_{j_1}\})\cup\{b_j\}$ and $(F\setminus \{a_{{j_2}}\})\cup\{b_j\}$ in $\Delta$. Then comparing these two facets we deduce that $F\in \Delta$, since their intersection has cardinality $m-1$.

\medskip

We remark that in  the more general  case that $a_{\ell_1}=b_{\ell_1},\ldots,a_{\ell_s}=b_{\ell_s}$, the proof is similar. We just consider the minor
\[
[1\ldots \hat{\ell}_1 \ldots \hat{\ell}_s\ldots m| a_1\ldots \hat{a}_{\ell_1}\ldots \hat{a}_{\ell_s}\ldots a_m]
\]
instead of $[s+1\ldots m| a_{s+1}\ldots a_m]$ and the minor
\[
[1\ldots \hat{\ell}_1 \ldots \hat{\ell}_s\ldots m| b_1\ldots \hat{b}_{\ell_1}\ldots \hat{b}_{\ell_s}\ldots b_m]
\]
instead of $[s+1\ldots m| b_{s+1}\ldots b_m]$ to get the desired minors in $\Gc$.

Therefore, the assertion of the theorem is proved if $s=k$.
\medskip

Now assume that $s<k$, and for every two sets in $\Delta$ with $k$ common elements
which at least $s+1$ of them have the same position in both sets, the result holds. Let $a_{\ell_1}=b_{t_1},\ldots, a_{\ell_{k-s}}=b_{t_{k-s}}$ for some integers $\ell_1<\cdots<\ell_{k-s}$ and $t_1<\cdots<t_{k-s}$, where $t_r\neq \ell_r$ for $r=1,\ldots,k-s$.
Assume that $$\{a_{\ell_{\sigma_1}},\ldots,a_{\ell_{\sigma_p}}\}\subset \{c_{s+2},\ldots,c_m\},\; \text{and}\; \{a_{\ell_{\tau_1}},\ldots,a_{\ell_{\tau_q}}\}\not\subset \{c_{s+2},\ldots,c_m\},$$ for $\{\sigma_1,\ldots,\sigma_p,\tau_1,\ldots,\tau_q\}=\{\ell_1,\ldots,\ell_{k-s}\}$.
First assume that $p=k-s$. Note that there exists some index $j$ with
$j\not\in \{1,\ldots,s+1,\ell_1,\ldots,\ell_{k-s}\}$, since $k<m-1$. If $c_j=a_j$ for some $j\not\in \{1,\ldots,s+1,\ell_1,\ldots,\ell_{k-s}\}$,
then $|\{a_1,\ldots,a_m\}\cap\{c_1,\ldots,c_m\}|>k$, and by the inductive hypothesis we will get all $m$-subsets of $\{a_1,\ldots,a_m\}\cup\{c_{1},\ldots,c_m\}$ in $\Delta$. Otherwise $|\{b_1,\ldots,b_m\}\cap\{c_1,\ldots,c_m\}|>k$, and by the inductive hypothesis all $m$-subsets of $\{b_1,\ldots,b_m\}\cup\{c_{1},\ldots,c_m\}$ belong to $\Delta$.
In both cases applying the same argument as in the case $s=k$, we deduce that all desired $m$-subsets  are in $\Delta$.

Now assume that $p<k-s$.
We claim that
$$c_{\ell_{r}}=b_{\ell_{r}} \quad \text{for}\quad r=\tau_1,\ldots,\tau_q,$$
in particular, $\{b_{\ell_{\tau_1}},\ldots,b_{\ell_{\tau_q}}\}\subset \{c_1,\ldots,c_m\}.$

Indeed, suppose that
$a_{\ell_{r}}\not\in\{c_{s+2},\ldots,c_m\}$. Therefore $c_{\ell_{r}}=b_{\ell_r}$ and $c_{t_{r}}=a_{t_{r}}$.
\medskip

Since $a_{s+1}<b_{s+1}<\cdots<b_m$, we have $\ell_r>s+1$ for all $r$. Therefore
$$c_1=b_1,\ldots,c_{s-1}=b_{s-1},c_{s+1}=b_{s+1},c_{\ell_{\tau_1}}=b_{\ell_{\tau_1}},\ldots,c_{\ell_{\tau_q}}=b_{\ell_{\tau_q}}, $$
$$c_{\ell_{\sigma_1}}=a_{\ell_{\sigma_1}}=b_{t_{\sigma_1}},\ldots,c_{\ell_{\sigma_p}}=a_{\ell_{\sigma_p}}=b_{t_{\sigma_p}},$$
which shows that $\{c_1,\ldots,c_m\}$ and $\{b_1,\ldots,b_m\}$ have at least $k$ common elements
and $s+q\geq s+1$ of them have the same position in  both sets.
Now applying the result of the first case to  these two sets, we deduce that all $m$-subsets of $$\{b_1,\ldots,b_m\}\cup\{c_{1},\ldots,c_m\}$$
are in $\Delta$. Now the same argument as in case $k=s$, for  $\{b_1,\ldots,b_m\}\cup\{c_{1},\ldots,c_m\}$ instead of
$\{a_1,\ldots,a_m\}\cup\{c_{1},\ldots,c_m\}$,
implies that  all desired $m$-subsets belong to $\Delta$.
\end{proof}

For determinantal facet ideals of closed simplicial complexes we may compute important numerical invariants.

\begin{Corollary}
\label{cm}
Let $\Delta$ be a closed simplicial complex of dimension $(m-1)$ and let $\Delta=\Delta_1\union\Delta_2\union\cdots\union \Delta_r$ its clique decomposition. For $1\leq \ell\leq r$ let $n_\ell$ be the number of vertices of $\Delta_\ell.$ Then:
\begin{itemize}
	\item[(a)]
	$\height J_\Delta=\sum_{\ell=1}^r\height J_{\Delta_\ell}=\sum_{\ell=1}^r n_\ell-(m-1)r.$
	\item [(b)] $J_\Delta$ is Cohen-Macaulay.
	\item [(c)] The Hilbert series of $S/J_\Delta$ has the form
	\[
	H_{S/J_\Delta}(t)=\frac{\prod_{\ell=1}^r Q_{\ell}(t)}{(1-t)^{mn-\sum_{\ell=1}^r n_\ell +(m-1)r}},
	\]
	where
	\[
	Q_\ell(t)=[\det(\sum_k{m-i \choose k}{n_\ell-j\choose k})_{1\leq i,j\leq m-1}]/ t^{m-1\choose 2}
	\]
	 for $1\leq \ell\leq r.$
	\item [(d)] The multiplicity of $S/J_\Delta$ is
	\[
	e(S/J_\Delta)=\prod_{\ell=1}^r {n_\ell \choose m-1}.
	\]
\end{itemize}
\end{Corollary}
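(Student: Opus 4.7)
The plan is to exploit Theorem~\ref{closedfatemeh} to pass from $J_\Delta$ to its initial ideal, decompose the latter as a tensor product of classical maximal-minor components, and transfer every invariant.

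By Theorem~\ref{closedfatemeh} the generating minors of $J_\Delta$ form a Gr\"obner basis. Closedness property~(1) shows that a facet of $\Delta$ lies in exactly one clique $\Delta_\ell$, so this Gr\"obner basis partitions according to the cliques and
\[
\ini_<(J_\Delta) = M_1 + \cdots + M_r, \qquad M_\ell := \ini_<(J_{\Delta_\ell}).
\]
Property~(2) says that any two generators drawn respectively from $M_i$ and $M_j$ with $i\neq j$ are coprime; since this prevents any single variable from appearing in both, the variable supports $W_\ell$ of the generators of the $M_\ell$ are pairwise disjoint. Writing $U$ for the remaining variables of $S$, one obtains
\[
S/\ini_<(J_\Delta) \iso \bigl(\Tensor_{\ell=1}^{r}K[W_\ell]/\bar M_\ell\bigr)\tensor_K K[U],
\]
where $\bar M_\ell = M_\ell\sect K[W_\ell]$, and a direct count gives $|W_\ell|=m(n_\ell-m+1)$.

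Because $\Delta_\ell=\Gamma_\ell^{(m-1)}$ is a full $(m-1)$-skeleton, $J_{\Delta_\ell}$ coincides, inside $K[X_\ell]$, with the classical maximal-minor ideal $I_m(X_\ell)$ of the $m\times n_\ell$ submatrix $X_\ell$ of $X$. Since $K[X_\ell]$ has $mn_\ell$ variables while $|W_\ell|=m(n_\ell-m+1)$, the factor $K[W_\ell]/\bar M_\ell$ differs from $K[X_\ell]/\ini_<(I_m(X_\ell))$ only by an extension of $m(m-1)$ free polynomial variables; consequently all the classical facts about $I_m(X_\ell)$---Cohen--Macaulayness by Hochster--Eagon \cite{Ho} together with Cohen--Macaulayness of its diagonal initial ideal, height $n_\ell-m+1$, multiplicity $\binom{n_\ell}{m-1}$, and the explicit Hilbert-series numerator $Q_\ell(t)$ (see \cite{BV}, \cite{BC})---translate directly to each tensor factor. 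Since tensor products over $K$ of Cohen--Macaulay graded $K$-algebras are Cohen--Macaulay, dimensions (hence heights) are additive, and Hilbert series multiply, a short numerology yields (a), (c), (d) as well as Cohen--Macaulayness of $S/\ini_<(J_\Delta)$. Passing back from $\ini_<(J_\Delta)$ to $J_\Delta$ is standard: Hilbert series---and hence dimension, height, and multiplicity---are preserved under Gr\"obner degeneration, and Cohen--Macaulayness of the initial ideal implies~(b).

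The only step that is not purely bookkeeping is the disjoint-support observation: one must deduce from property~(2) that the whole variable supports of the $M_\ell$, not merely individual pairs of generators, are disjoint across distinct cliques. This is essentially immediate---any variable shared between a generator of $M_i$ and a generator of $M_j$ would destroy coprimality of that pair---but it is the single structural ingredient not already contained in the classical theory; once it is in place, the rest is an assemblage of known results on $K[X_\ell]/I_m(X_\ell)$.
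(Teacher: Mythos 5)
Your proposal is correct and follows essentially the same route as the paper: use characterization (2) of closedness to see that the diagonal initial monomials coming from distinct cliques involve disjoint sets of variables, decompose $S/\ini_<(J_\Delta)$ as a tensor product of the classical diagonal degenerations of maximal-minor ideals $I_m(X_\ell)$ (times a free polynomial part), import the known height, Hilbert series, multiplicity and Cohen--Macaulayness of those factors, and transfer everything back to $J_\Delta$ via the Gr\"obner degeneration. The only cosmetic difference is that you make the disjoint-support step and the count $|W_\ell|=m(n_\ell-m+1)$ explicit, which the paper leaves implicit.
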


\begin{proof}
It follows from  characterization  (2) of closed simplicial complexes that  the initial ideals  ${\ini}_<(J_{\Delta_\ell})$ are monomial ideals in
disjoint sets of variables, therefore the first equality in (a) is obvious. The second equality follows from the known formula of the height of
determinantal ideals, see for instance \cite[Theorem 6.35]{EH}.

By \cite[Corollary 3.3.5]{HHBook}, $S/J_{\Delta}$ and $S/\ini_<(J_\Delta)$ have the same Hilbert series. By \cite[Corollary 1]{CH} or
\cite[Theorem 6.9]{BC} and \cite[Theorem 3.5]{HT}, we know formulas for the Hilbert series and multiplicity for determinantal rings defined by maximal minors. Therefore, (c) and (d) follows once we observe that, by characterization (2) of closed simplicial complexes, we have
\begin{equation}\label{tensor}
S/\ini_<(J_\Delta)\cong \bigotimes_{i=1}^r S_i/\ini_<(J_{\Delta_i})
\end{equation}
where $S_i$ are polynomial rings in disjoint sets of variables whose union is the set of all the variables of $X.$
 By using again relation~(\ref{tensor}), since all factors in the right hand side are Cohen-Macaulay (see \cite{CGG} and \cite{St90}),  it follows that  ${\ini}_<(J_\Delta)={\ini}_<(J_{\Delta_1})+\cdots+{\ini}_<(J_{\Delta_r})$ is
also Cohen--Macaulay. This implies that $J_\Delta$ is Cohen--Macaulay, see for example \cite[Corollary 3.3.5]{HHBook}.
\end{proof}

\begin{Corollary}
\label{gorenstein}
Suppose that  $\Delta$ is closed with clique decomposition $\Delta=\Delta_1\cup\ldots\cup \Delta_r$.  Then the $K$-algebra
\[
A=K[\{[a_1\ldots a_m]:\ \{a_1,\ldots, a_m\}\in\Delta\}]
\]
is Gorenstein of dimension $r+\sum_{i=1}^rm(n_i-m)$, where $n_i$ is the cardinality of the vertex set of $\Delta_i$.
\end{Corollary}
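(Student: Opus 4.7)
My plan is to identify $A$ with a tensor product of Grassmannian coordinate rings, one per clique, and then invoke the classical Gorenstein property of $K[G(m,n_\ell)]$ together with additivity of Krull dimension and preservation of the Gorenstein property under tensor product over a field.

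For each clique, set $A_\ell = K[\mu_F : F \in \mathcal{F}(\Delta_\ell)]$. Since $\Delta_\ell$ is the full $(m-1)$-skeleton of a simplex on $n_\ell$ vertices, $\mathcal{F}(\Delta_\ell)$ is the set of \emph{all} $m$-subsets of $V(\Delta_\ell)$, so $A_\ell$ is generated by all maximal minors of the generic $m\times n_\ell$-submatrix $X[V(\Delta_\ell)]$. Thus $A_\ell$ is the homogeneous coordinate ring of the Grassmannian $G(m, n_\ell)$ under its Pl\"ucker embedding, and hence is Gorenstein of Krull dimension $m(n_\ell - m)+1$; see for example \cite{BC}. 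The obvious multiplication map
\[
\Phi : A_1 \tensor_K A_2 \tensor_K \cdots \tensor_K A_r \longrightarrow A
\]
is surjective, and I intend to show that it is an isomorphism by a Hilbert-series comparison. By Sturmfels' theorem \cite{St90}, within each clique the maximal minors of $X[V(\Delta_\ell)]$ form a SAGBI basis of $A_\ell$ with respect to the diagonal term order, so $\ini_<(A_\ell) = K[\ini_<(\mu_F) : F \in \mathcal{F}(\Delta_\ell)]$. Closedness in the form of characterization~(2) tells me that the initial monomials $\ini_<(\mu_F)$ coming from different cliques are supported on disjoint variable sets, which lets me glue the clique-wise SAGBI bases into a SAGBI basis of $A$ and identifies
\[
\ini_<(A) \;=\; \Tensor_{\ell=1}^r \ini_<(A_\ell).
\]
Since the Hilbert series is preserved under passage to the initial algebra and is multiplicative across tensor products over $K$, one gets $H_A(t) = H_{A_1 \tensor \cdots \tensor A_r}(t)$, and the graded surjection $\Phi$ must be an isomorphism.

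With $A \iso A_1 \tensor_K \cdots \tensor_K A_r$ in hand, the conclusion follows from two standard facts: a tensor product over $K$ of finitely generated Gorenstein $K$-algebras is Gorenstein, and its Krull dimension is the sum of the dimensions of the factors. Consequently
\[
\dim A \;=\; \sum_{\ell=1}^r \bigl(m(n_\ell - m) + 1\bigr) \;=\; r + \sum_{\ell=1}^r m(n_\ell - m),
\]
as required. The principal obstacle in this plan is verifying that $\{\mu_F : F \in \mathcal{F}(\Delta)\}$ forms a SAGBI basis of $A$: the clique-by-clique case is Sturmfels', but combining the bases across cliques depends essentially on the disjoint-variable property from closedness, and one must rule out that any cross-clique subduction is needed. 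Everything else is a matter of assembling standard results on Grassmannian coordinate rings and tensor products of Gorenstein algebras.
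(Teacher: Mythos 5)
Your overall strategy is sound, and its load-bearing step coincides with the paper's: everything reduces to showing that the minors $\mu_F$, $F\in\mathcal{F}(\Delta)$, form a Sagbi basis of $A$, so that $\ini_<(A)$ is the tensor product, over the cliques, of the initial algebras $\ini_<(A_\ell)$, which live in pairwise disjoint sets of variables by characterization (2) of closedness. Where you diverge is in the endgame: you upgrade the equality of Hilbert series to an isomorphism $A\iso A_1\otimes_K\cdots\otimes_K A_r$ and then quote that each Grassmannian coordinate ring $A_\ell$ is Gorenstein of dimension $m(n_\ell-m)+1$ and that a tensor product over $K$ of graded Gorenstein algebras is Gorenstein (minimal free resolutions tensor, so depths add and Cohen--Macaulay types multiply). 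The paper instead stays on the initial-algebra side: it identifies each $K[\ini_<\mu_F:\ F\in\mathcal{F}(\Delta_\ell)]$ as the Hibi ring of a graded distributive lattice, gets Gorensteinness of the factors from Hibi's theorem, and transfers Gorensteinness and the dimension count from $\ini_<(A)$ back to $A$ via \cite[Theorem 3.16]{BC}. Both endgames are legitimate; yours has the advantage of exhibiting $A$ itself as a tensor product, while the paper's avoids having to prove injectivity of $\Phi$.

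The one point you flag as the principal obstacle --- ruling out cross-clique subduction --- does need an argument, and there are two ways to supply it. The paper's way: by the Sagbi basis criterion it suffices to lift a generating set of binomial relations of $B=K[\ini_<\mu_F:\ F\in\mathcal{F}(\Delta)]$; since the initial monomials from distinct cliques involve disjoint variables, $B\iso\bigotimes_\ell B_\ell$ and its toric ideal is generated by the clique-local binomials, each of which lifts by the classical straightening relations inside the algebra of all maximal minors of $X$ restricted to $V(\Delta_\ell)$. Alternatively, you can rearrange your own Hilbert-series comparison so that the Sagbi property comes out as a corollary rather than a prerequisite: one always has $B\subseteq\ini_<(A)$ and $\dim_K\ini_<(A)_d=\dim_K A_d$, while surjectivity of $\Phi$ gives $\dim_K A_d\leq\dim_K(A_1\otimes_K\cdots\otimes_K A_r)_d$; since $\dim_K B_d=\dim_K(\bigotimes_\ell B_\ell)_d=\dim_K(\bigotimes_\ell A_\ell)_d$ by Sturmfels' clique-wise Sagbi result together with the disjointness of variables, the two inequalities sandwich to equalities, so $\Phi$ is an isomorphism and $B=\ini_<(A)$ simultaneously. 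With either repair your proof is complete.
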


\begin{proof}  We first observe that
\begin{eqnarray*}
B:= K[\{{\ini}_<[a_1\ldots a_m]\: \{a_1,\ldots, a_m\}\in\Delta\}]\iso \bigotimes_{i=1}^r
K[\{{\rm in}_<[a_1\ldots a_m]\: \{a_1,\ldots, a_m\}\in\Delta_i\}].
\end{eqnarray*}
We use the Sagbi basis criterion (see \cite[Theorem 6.43]{EH}) which asserts that the minors $[a_1\ldots a_m]$ with $\{a_1,\ldots, a_m\}\in\Delta$ form a Sagbi basis of $A$, that is, the monomials $[a_1\ldots a_m]$ with $\{a_1,\ldots, a_m\}\in\Delta$ generate the initial algebra $\ini_<(A)$,  if a generating set of binomial relations of the algebra $B$ can be lifted. It follows from the tensor presentation of $B$ that a set of binomial relations of $B$ is obtained as the union of the binomial relations of each of the algebras $K[\{{\rm in}_<[a_1\ldots a_m]\: \{a_1,\ldots, a_m\}\in\Delta_i\}]$. For these algebras it is known  that they admit a set of liftable relations. Thus it follows that $B=\ini_<(A)$.

Next we observe  that for each $i$, the $K$-algebra $K[\{{\rm in}_<[a_1\ldots a_m]:\ \{a_1,\ldots, a_m\}\in\Delta_i\}]$
is the Hibi ring associated to the distributive lattice $\Lc_i$ of all maximal $m$-minors
$[a_1\ldots a_m]$ with $\{a_1,\ldots, a_m\}\in\Delta_i$ whose partial order is given by
\[
[a_1\ldots a_m]\leq [b_1,\ldots,b_m] \quad \Leftrightarrow \quad a_i\leq b_i \quad \text{for} \quad i=1,\ldots,m.
\]
The distributive lattice $\Lc_i$ is graded, which by a theorem of Hibi \cite{Hi} implies that  $$K[\{{\rm in}_<[a_1\ldots a_m]:\ \{a_1,\ldots, a_m\}\in\Delta_i\}]$$ is Gorenstein. It follows that  $A$ is Gorenstein; see \cite[Theorem 3.16]{BC}.

Finally we notice that
\begin{eqnarray*}
\dim A=\dim \ini_<(A)&=&\sum_{i=1}^r\dim K[\{{\rm in}_<[a_1\ldots a_m]\: \{a_1,\ldots, a_m\}\in\Delta_i\}]\\
&=&\sum_{i=1}^r \dim K[\{[a_1\ldots a_m]\: \{a_1,\ldots, a_m\}\in\Delta_i\}].
\end{eqnarray*}
The desired formula for the dimension of $A$ follows, because  $K[\{[a_1\ldots a_m]\: \{a_1,\ldots, a_m\}\in\Delta_i\}]$ is the algebra  of all maximal minors of an $m\times n_i$-matrix of indeterminates, and hence its dimension is  $m(n_i-m)+1$; see, for example, \cite[Theorem 6.45]{EH}.
\end{proof}

\section{Primality of  determinantal facet ideals}

In this and the following section we want to discuss when a determinantal facet ideal is a prime ideal. In general $J_\Delta$ need not  be a prime ideal even if $\Delta$ is closed.  For example, if $\Delta$ is the simplicial complex with facets  $\mathcal{F}(\Delta)=\{\{1,2,3\}, \{2,3,4\}\}$ or  $\mathcal{F}(\Delta)=\{\{1,2,3\},\{2,3,6\},\{3,4,5\}\}$, then $J_\Delta$ is not a prime ideal. Indeed, in the first case,
$\height J_\Delta=\height \ini_<(J_\Delta)=2$ since $\ini_<(J_\Delta)$ is generated by a regular sequence of length $2$ and $P=(x_2y_3-x_3y_2,x_2z_3-x_3z_2,y_2z_3-y_3z_2)$ is a prime ideal of height $2$ which obviously strictly contains $J_\Delta.$ Here we denoted the variables of the first row of $X$ by $x,$ of the second row by $y,$ and of the third row by $z$ together with appropriate indices. In the second case, we get $\height J_\Delta=\height \ini_<(J_\Delta)=3$ and $J_\Delta\subsetneq (x_3,y_3,z_3),$ hence clearly $J_\Delta$ is not prime. Even in this rather simple examples we  see  that the primary decomposition of determinantal facet ideals looks much more complicated than  that for binomial edge ideals.

 The main result of this section, Theorem~\ref{viviana}, explains why $J_\Delta$ is not a prime ideal in the above examples.

The proofs of primality that follow depend on localization with respect to nonzero divisors. This technique allows to use induction arguments. Indeed,  suppose we want to show that $J\subset S$ is a prime ideal. Then we are trying to find an element $f\in S$ which is regular modulo $J$. This implies that the natural map $S/J\to (S/J)_f$ is injective. Now, if we can find a prime ideal $L\subset S$ such that $(S/L)_f\iso (S/J)_f$, we conclude that $(S/J)_f$, and consequently $S/J$, is a domain which implies that $J$ is a prime ideal. This procedure often allows us to use inductive arguments, as in many cases  $L$ is of a simpler structure.

The next lemma  helps us to understand the effect of localization when we are dealing with ideals generated by minors of a matrix.

\begin{Lemma}
\label{rules}
Let $K$ be a field,   $X$ be an $m\times n$-matrix of indeterminates and $I\subset S=K[X]$ an ideal generated by a set $\Gc$ of minors. Furthermore,
let $x_{ij}$ be an entry of $X$. We assume that for each minor $[a_1\ldots a_t|b_1\ldots b_t]\in \Gc, t\geq 1,$ there exists $\ell $ such that
$a_\ell=i$ so that every minor of $\Gc$ involves the $i$th row.

Then $(S/I)_{x_{ij}}\iso (S/J)_{x_{ij}}$ where $J$ is generated by the minors  $[a_1\ldots a_t|b_1\ldots b_t]\in \Gc$ with $b_\ell\neq j$ for all
$\ell\in \{1,\ldots,t\},$ and the minors $[a_1\ldots\hat{a}_\ell\ldots a_t|b_1\ldots\hat{b}_k\ldots b_t]$ where $[a_1\ldots a_t|b_1\ldots b_t]\in \Gc$ and $a_\ell=i$ and
$b_k=j$.
\end{Lemma}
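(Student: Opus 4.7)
The plan is to exhibit an explicit isomorphism induced by the ``pivot on $x_{ij}$'' change of coordinates. Over $S_{x_{ij}}$, subtracting $(x_{pj}/x_{ij})$ times row $i$ from every other row $p$ preserves determinants and zeros out column $j$ outside of row $i$. After this pivot, any minor of $X$ that involves row $i$ and column $j$ collapses, by Laplace expansion along column $j$, to $\pm x_{ij}$ times the minor obtained by deleting row $i$ and column $j$, while any minor that involves row $i$ but not column $j$ is expressed purely in the pivoted entries. Since every generator of $I$ involves row $i$, this is precisely what is needed to compare $I$ and $J$ after localization.

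To package the pivot as an actual ring isomorphism, I would introduce an auxiliary polynomial ring $S'=K[Z]$ in a fresh $m\times n$ matrix $Z=(z_{pq})$ of indeterminates, and define the $K$-algebra homomorphism $\psi\colon S'\to S_{x_{ij}}$ by
\begin{equation*}
\psi(z_{iq})=x_{iq},\quad \psi(z_{pj})=x_{pj},\quad \psi(z_{pq})=x_{pq}-\frac{x_{pj}x_{iq}}{x_{ij}}\ (p\neq i,\ q\neq j).
\end{equation*}
Because $\psi(z_{ij})=x_{ij}$ is invertible in $S_{x_{ij}}$, $\psi$ extends to $S'_{z_{ij}}\to S_{x_{ij}}$, and one checks that the map sending $x_{pq}\mapsto z_{pq}+z_{pj}z_{iq}/z_{ij}$ (for $p\neq i,\,q\neq j$) and fixing row $i$ and column $j$ is an inverse, so $\psi\colon S'_{z_{ij}}\xrightarrow{\sim}S_{x_{ij}}$.

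Next I would fix $M=[a_1\ldots a_t|b_1\ldots b_t]\in\Gc$ with $a_\ell=i$ and apply to the submatrix $X[a_1\ldots a_t|b_1\ldots b_t]$ the row operations ``row $a_r\to$ row $a_r-(x_{a_r,j}/x_{ij})\cdot$ row $i$'' for $r\neq\ell$. These preserve $M$ but replace each entry $x_{a_r,b_q}$ ($r\neq\ell$) by $\psi(z_{a_r,b_q})$ when $b_q\neq j$ and by $0$ when $b_q=j$, while row $i$ entries remain equal to $\psi(z_{i,b_q})$. So if $j\notin\{b_1,\ldots,b_t\}$ then $M=\psi\bigl([Z;a_1\ldots a_t|b_1\ldots b_t]\bigr)$, whereas if $b_k=j$ the $k$-th column of the transformed matrix has a single nonzero entry $x_{ij}$ in row $\ell$, and Laplace expansion along it gives
\begin{equation*}
M=(-1)^{\ell+k}\,x_{ij}\,\psi\bigl([Z;a_1\ldots\widehat{a}_\ell\ldots a_t|b_1\ldots\widehat{b}_k\ldots b_t]\bigr).
\end{equation*}

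Let $J'\subset S'$ be the ideal built from $\Gc$ in the same way $J$ is, but in terms of $Z$. The two identities above show that $\psi$ carries each generator of $J'\cdot S'_{z_{ij}}$ to a unit multiple of a generator of $I\cdot S_{x_{ij}}$ and vice versa, so $\psi(J'\cdot S'_{z_{ij}})=I\cdot S_{x_{ij}}$. Hence $\psi$ descends to an isomorphism $(S'/J')_{z_{ij}}\iso (S/I)_{x_{ij}}$, and the renaming $z_{pq}\leftrightarrow x_{pq}$ identifies $(S'/J')_{z_{ij}}$ with $(S/J)_{x_{ij}}$, yielding the desired conclusion. The only delicate point is the sign and index bookkeeping in the Laplace expansion, which determines the correct shape of $J$; once the change of coordinates $\psi$ is set up, everything else is routine.
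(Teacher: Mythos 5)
Your proposal is correct and follows essentially the same route as the paper: both perform the pivot change of coordinates $x_{pq}\mapsto x_{pq}\pm x_{pj}x_{ij}^{-1}x_{iq}$ on the localization (the paper realizes it directly as an automorphism $\varphi$ of $S_{x_{ij}}$, you package its inverse via an auxiliary ring $S'$), and then use the same row operations and Laplace expansion to identify each generating minor with a unit multiple of the corresponding generator of $J$. The case analysis (column $j$ present or absent) and the resulting identification $I S_{x_{ij}}=\psi(J'S'_{z_{ij}})$ match the paper's computation exactly.
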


\begin{proof} For simplicity we may assume that $i=1$ and $j=1$. We apply the  automorphism $\varphi\: S_{x_{11}}\to S_{x_{11}}$ with
\[
x_{ij}\mapsto x^\prime_{ij}=\left\{
\begin{array}{ll}
	x_{ij}+x_{i1}x_{11}^{-1}x_{1j}, & \text{if } i\neq 1 \text{ and } j\neq 1,\\
	x_{ij}, & \text{if } i=1 \text{ or } j=1.
\end{array}
\right.
\]
Let $I'\subset S_{x_{11}}$ be the ideal which is the image of $IS_{x_{11}}$ under the automorphism  $\varphi$. Then $(S/I)_{x_{11}}\iso S_{x_{11}}/I'$. The ideal $I'$ is generated in $S_{x_{11}}$ by the elements $\varphi(\mu_M)$ where $\mu_M\in \Gc$.  Note that if $\mu_M=[a_1\ldots a_t|b_1\ldots b_t]$, then $\varphi(\mu_M)=\det(x_{a_ib_j}')_{i,j=1,\ldots,t}$.

In the following  we may assume that $a_1<a_2<\cdots <a_t$ and $b_1<b_2<\cdots<b_t$ for $\mu_M=[a_1\ldots a_t|b_1\ldots b_t]\in \Gc$. Then our assumption implies that $a_1=1$. Let us first consider the case that $b_1\neq 1$. Then $\varphi(\mu_M)$ is the determinant of the matrix
\[
\left(
  \begin{array}{cccc}
    x_{1b_1} & x_{1b_2} & \cdots & x_{1b_t} \\
     x_{a_2b_1}+x_{a_21}x_{11}^{-1}x_{1 b_1}&  x_{a_2b_2}+x_{a_21}x_{11}^{-1}x_{1 b_2} & \cdots &  x_{a_2b_t}+x_{a_21}x_{11}^{-1}x_{1b_t}\\
    \vdots & \vdots & \cdots & \vdots  \\
     x_{a_tb_1}+x_{a_t1}x_{11}^{-1}x_{1 b_1}&  x_{a_tb_2}+x_{a_t1}x_{11}^{-1}x_{1 b_2} & \cdots &  x_{a_tb_t}+x_{a_t1}x_{11}^{-1}x_{1 b_t}\\
     \end{array}
\right)
\]
By subtracting  suitable multiples of the first row from the other rows we see that $$\varphi(\mu_M)=\det(x_{a_ib_j})_{i,j=1,\ldots,t}=\mu_M.$$

In the case that $b_1=1$, the element $\varphi(\mu_M)$ is the determinant of the matrix
\[
\left(
  \begin{array}{cccc}
    x_{11} & x_{1b_2} & \cdots & x_{1b_t} \\
      x_{a_21}&  x_{a_2b_2}+x_{a_21}x_{11}^{-1}x_{1 b_2} & \cdots &  x_{a_2b_t}+x_{a_21}x_{11}^{-1}x_{1b_t}\\
    \vdots & \vdots & \cdots & \vdots  \\
     x_{a_t1}&  x_{a_tb_2}+x_{a_t1}x_{11}^{-1}x_{1 b_2} & \cdots &  x_{a_tb_t}+x_{a_t1}x_{11}^{-1}x_{1 b_t}\\
     \end{array}
\right)
\]
Applying suitable row operations we obtain the matrix
\[
\left(
  \begin{array}{cccc}
    1 & x_{11}^{-1}x_{1b_2} & \cdots & x_{11}^{-1}x_{1b_t} \\
     0&  x_{a_2b_2}& \cdots &  x_{a_2b_t}\\
    \vdots & \vdots & \cdots & \vdots  \\
     0&  x_{a_tb_2} & \cdots &  x_{a_tb_t}\\
     \end{array}
\right)
\]
It follows that $\varphi(\mu_M)=\det(x_{a_ib_j})_{i,j=2,\ldots,t}$. These calculations show that $I'=JS_{x_{11}}$, as desired.
\end{proof}

Now we are ready to prove

\begin{Theorem}
\label{viviana}
Let $m\leq n$, let $\Delta$ be a pure $(m-1)$-dimensional closed simplicial complex on the vertex set $[n]$ and let $\Delta=\Delta_1\union \ldots\union \Delta_r$
be the clique decomposition  of $\Delta$.
If $J_{\Delta}$ is a prime ideal, then for all $2\leq t\leq \min(m,r)$ and for any pairwise distinct cliques $\Delta_{i_1},\ldots,\Delta_{i_t}$ we
have
\[
|V(\Delta_{i_1})\cap\cdots \cap V(\Delta_{i_t})|\leq m-t.
\]
\end{Theorem}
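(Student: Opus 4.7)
The plan is to prove the contrapositive. Assume that $t$ pairwise distinct cliques $\Delta_{i_1},\ldots,\Delta_{i_t}$ satisfy $|V(\Delta_{i_1})\cap\cdots\cap V(\Delta_{i_t})|\geq m-t+1$; passing to a subset of the intersection we may set $W\subseteq V(\Delta_{i_1})\cap\cdots\cap V(\Delta_{i_t})$ with $|W|=s:=m-t+1$. The goal is to exhibit a prime ideal $P$ with $J_\Delta\subsetneq P$ and $\height P=\height J_\Delta$; this, combined with the Cohen--Macaulayness supplied by Corollary~\ref{cm}, will force $J_\Delta$ to fail to be prime.

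The first step is the classical Laplace expansion. For every facet $F\in \mathcal{F}(\Delta)$ with $W\subseteq F$, expanding $\mu_F$ along the $s$ columns indexed by $W$ gives
\[
\mu_F \;=\; \sum_{\substack{I\subseteq[m]\\|I|=s}}(-1)^{\sigma(I)}\,[I\mid W]\cdot [I^c\mid F\setminus W],
\]
so $\mu_F$ lies in the ideal $P_0:=I_s(X[-,W])$ of maximal minors of the $m\times s$ submatrix on columns $W$. Since each clique $\Delta_{i_j}$ is the $(m-1)$-skeleton of a simplex on $V(\Delta_{i_j})\supseteq W$, such facets $F\supseteq W$ always exist in each $\Delta_{i_j}$. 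Classically, $P_0$ is a prime ideal of height $m-s+1=t$.

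Now let $\Delta^\circ$ denote the pure subcomplex of $\Delta$ whose facets are those $F\in \mathcal{F}(\Delta)$ with $W\not\subseteq F$, and set
\[
P \;:=\; P_0 + J_{\Delta^\circ}.
\]
Step 1 gives $\mu_F\in P_0\subseteq P$ for $W\subseteq F$ and $\mu_F\in J_{\Delta^\circ}\subseteq P$ for $W\not\subseteq F$, hence $J_\Delta\subseteq P$. The inclusion is strict because the $s\times s$ minors generating $P_0$ have degree $s<m$ (here $t\geq 2$), while $J_\Delta$ is homogeneous with no nonzero elements of degree $<m$; thus $P_0\not\subseteq J_\Delta$. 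Granted that $P$ is prime of the right height, the argument concludes as follows: by Corollary~\ref{cm} the ideal $J_\Delta$ is Cohen--Macaulay, hence unmixed, so in the catenary ring $S$ a strict inclusion of primes $J_\Delta\subsetneq P$ would force $\height P>\height J_\Delta$, contradicting $\height P=\height J_\Delta$.

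The principal obstacle is precisely this verification: that $P$ is prime of height equal to $\height J_\Delta$. The closedness of $\Delta$, in the form of characterization~(1), is the essential input: it ensures that vertices shared by distinct cliques occupy distinct positions in the sorted facet tuples, which strongly suggests that modulo $P_0$ (where the columns indexed by $W$ become mutually proportional) the minors from $\Delta^\circ$ contributed by different cliques remain independent. Heuristically, after a row/column substitution as in Lemma~\ref{rules}, one expects $P$ to decompose as a combination of $P_0$ with the classical (prime) maximal-minor ideals of the remaining cliques, each contributing its expected codimension $n_j-m+1$, so that the total height matches the formula in Corollary~\ref{cm}(a). Making this decomposition rigorous --- particularly for cliques outside $\Delta_{i_1},\ldots,\Delta_{i_t}$ that partially overlap $W$ --- is the technical heart of the proof.
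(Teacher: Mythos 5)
Your strategy --- exhibiting an ideal $P=P_0+J_{\Delta^\circ}$ with $J_\Delta\subsetneq P$ and $\height P\leq \height J_\Delta$ --- is a legitimate line of attack, and it does recover the paper's two motivating examples (where $W$ is the whole intersection and $\Delta^\circ=\emptyset$). The Laplace expansion, the containment $J_\Delta\subseteq P$, and the strictness via degrees are all correct. You could in fact drop the primality of $P$ entirely: if $\height P\leq\height J_\Delta$, then a minimal prime $Q$ of $P$ of minimal height satisfies $J_\Delta\subseteq Q$ and $\height Q=\height J_\Delta$, hence $Q=J_\Delta$ (nested primes of equal finite height coincide; no appeal to unmixedness is needed), hence $P\subseteq J_\Delta$, contradicting the degree argument.

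However, the step you defer is the entire proof, and it is not a routine verification. Two concrete obstructions: first, $\Delta^\circ$ is in general \emph{not} closed --- two facets $F,G$ each missing a point of $W$ can have an $m$-subset of $F\cup G$ containing $W$ --- so Corollary~\ref{cm} does not compute $\height J_{\Delta^\circ}$; second, the only a priori bound, $\height(P_0+J_{\Delta^\circ})\leq \height P_0+\height J_{\Delta^\circ}$, overshoots the target by $t$. To close the gap you would have to prove that on the rank-$(s-1)$ locus of the columns indexed by $W$ the facet minors of each $\Delta_{i_j}$ not containing $W$ collapse to the maximal minors of a $t\times(n_{i_j}-s)$ quotient matrix, so that each of these cliques contributes codimension $n_{i_j}-m$ rather than $n_{i_j}-m+1$, and you would also have to control the cliques meeting $W$ only partially as well as the lower-rank strata. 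The paper runs your kind of direct height comparison only in the extreme case $t=m$, where $W=\{a\}$ is a single vertex, $P_0=(x_{1a},\dots,x_{ma})$ is generated by variables, $\Delta^\circ$ \emph{is} closed, and the $x_{ia}$ form a regular sequence modulo $J_{\Delta^\circ}$, so the height count is exact and painless; for $t<m$ it instead inducts on $m$ by localizing at $x_{ma}$ via Lemma~\ref{rules}, passing to the links of the cliques through $a$ --- which still violate the intersection bound in dimension $m-2$ --- and bootstrapping from the known case $m=2$. As written, your proposal is an unproved program rather than a proof.
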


\begin{proof} We make induction on $m.$ The initial step, $m=2$, is already known \cite{HHH}.

Let us make the inductive step. We first consider $t<m.$ Let us assume that there exist $\Delta_{i_1},\ldots,\Delta_{i_t}$ such that
$|V(\Delta_{i_1})\cap\cdots \cap V(\Delta_{i_t})|> m-t.$ Without loss of generality we may assume  that $V(\Delta_1)\cap\cdots \cap
V(\Delta_t)=\{a_1,a_2,\ldots,a_\ell\}$ with $\ell\geq m-t+1$ and
$1\leq a=a_1 <\cdots < a_\ell\leq n$. We may further assume that there exists $s\geq t$ such that $a\in V(\Delta_i)$ for $1\leq i\leq s$ and $a\notin V(\Delta_i)$ for
$s+1\leq i\leq r.$ Since $J_\Delta$ is prime, it follows that $x_{ma}$ is regular on $J_\Delta$ and $J_\Delta S_{x_{ma}}$ is also a prime ideal in the
localization $S_{x_{ma}}$ of $S$. Thus $(S/J_\Delta)_{x_{ma}}$ is a domain. By Lemma~\ref{rules}, it follows that $(S/J_\Delta)_{x_{ma}}\iso
(S/L)_{x_{ma}},$ where $L=L_1+\sum_{i=s+1}^rJ_{\Delta_i},$ and $L_1$ is the determinantal facet ideal of the closed $(m-2)$-dimensional simplicial
complex $\Delta^\prime$ with the clique decomposition $\Delta^\prime=\Delta_1^\prime\union\cdots\union \Delta_s^\prime,$ where
$\Delta_i^\prime=\langle F\setminus \{a\}: F\in\mathcal{F}(\Delta_i), a\in F\rangle$ for $1\leq i\leq s.$ As $\Delta^\prime_1,\ldots,\Delta^\prime_t$
intersect in $\ell-1\geq m-t$ vertices, by induction, it follows that $L_1$ is not a prime ideal which will imply, as we are going to show, that
$L$ is not a prime ideal. But this is a contradiction, since $(S/L)_{x_{ma}}$ must be a domain.

Since $L_1$ is not prime, there exist  polynomials $f,g$ in $S$ such that $fg\in L_1$ and $f,g\notin L_1.$ We claim that $f,g\notin L.$ Let us assume, for instance, that $f\in L.$ Then we may write $f=\sum_{G}h_G\gamma_G+ \sum_{F}h_F\mu_F$ for some
polynomials $h_G,h_F\in S$ where the first sum is taken over all  $G\in \bigcup_{i=1}^s {\mathcal{F}}(\Delta_i^\prime)$, and the second one over all
$F\in \bigcup_{i=s+1}^r {\mathcal{F}}(\Delta_i)$. Then, by mapping  the indeterminates $x_{mj}$ to zero  for all $j\neq a$ and $x_{ma}$ to $1,$ we get $f=\sum_{G}h_G^\prime \gamma_G$ for some polynomials $h_G^\prime \in S,$ thus $f\in L_1,$ a contradiction. Therefore, $L$ is not a prime ideal.

It remains to consider the case  $t=m.$ We may assume that $|V(\Delta_1)\cap\cdots \cap V(\Delta_m)|\geq 1.$ Let $a\in V(\Delta_1)\cap\cdots \cap V(\Delta_m)$. It is clear that $J_\Delta\subset (J_{\Delta^\prime},x_{1a},\ldots,x_{ma})$ where $\Delta^\prime=\{F\in \Delta: a\notin F\}.$ Since $\Delta$ is closed, it follows that $\Delta^\prime$ is closed as well and, moreover, by using Corollary~\ref{cm},
\[
\height J_{\Delta^\prime}=\sum_{i=1}^m ((n_i-1)-m+1)+\sum_{i=m+1}^r(n_i-m+1)=\height J_{\Delta}-m.
\]
Since $x_{1a},\ldots,x_{ma}$ is obviously a regular sequence on $S/J_{\Delta^\prime},$ we have
\[
\height(J_{\Delta^\prime},x_{1a},\ldots,x_{ma})=\height J_{\Delta^\prime}+m=\height J_{\Delta}.
\]
Let $P$ be a minimal prime of $(J_{\Delta^\prime},x_{1a},\ldots,x_{ma})$
of height equal to $\height(J_\Delta).$ Since $J_\Delta$ and $P$ are prime ideals of the same height, we must have $J_\Delta=P.$ But  $P$ contains
the indeterminates $x_{1a},\ldots,x_{ma},$ which do not belong to $J_{\Delta}.$ Therefore, we have got a contradiction.
\end{proof}

The proofs of primality that follow depend on localization with respect to nonzero divisors. The next result tells us that in our situation all variables are nonzero divisors.

\begin{Lemma}
\label{lasthope}
Let  $\Delta$ be  closed $(m-1)$-dimensional simplicial complex with the property that any $m$ pairwise distinct cliques of $\Delta$ have an empty intersection.  Then each of the variables $x_{ij}$ is regular modulo $J_\Delta$.
\end{Lemma}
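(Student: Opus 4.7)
The plan is to use Cohen--Macaulayness of $S/J_\Delta$ (Corollary~\ref{cm}) to reduce the regularity of $x_{ij}$ to a height computation, and then to extract the required height from the quadratic Gr\"obner basis of $J_\Delta$ supplied by Theorem~\ref{closedfatemeh}. Since $S/J_\Delta$ is Cohen--Macaulay, every associated prime of $J_\Delta$ is minimal of height $h := \height J_\Delta = \sum_{\ell=1}^r n_\ell - (m-1)r$; consequently, $x_{ij}$ is regular modulo $J_\Delta$ precisely when $\height(J_\Delta + (x_{ij})) = h+1$. Setting $\bar S := S/(x_{ij})$ and letting $\bar J$ be the image of $J_\Delta$, the identity $\height(J_\Delta + (x_{ij})) = 1 + \height_{\bar S}(\bar J)$ means it suffices to prove $\height_{\bar S}(\bar J) = h$.

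First I would establish the easy upper bound $\height_{\bar S}(\bar J) \leq h$. Writing $\bar J = \sum_\ell \bar J_{\Delta_\ell}$, each $J_{\Delta_\ell}$ is the classical ideal $I_m(X_\ell)$ of maximal minors of the $m \times n_\ell$ submatrix $X_\ell$ on the columns $V(\Delta_\ell)$, which is prime of height $n_\ell - m + 1$. Since $J_{\Delta_\ell}$ is generated in degree $m \geq 2$, we have $x_{ij} \notin J_{\Delta_\ell}$, and primeness gives that $x_{ij}$ is regular modulo $J_{\Delta_\ell}$; hence $\height_{\bar S}(\bar J_{\Delta_\ell}) = n_\ell - m + 1$. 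Subadditivity of heights in the polynomial ring $\bar S$ then yields $\height_{\bar S}(\bar J) \leq \sum_\ell (n_\ell - m + 1) = h$.

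For the matching lower bound $\height_{\bar S}(\bar J) \geq h$, I would compute $\ini_<(J_\Delta + (x_{ij}))$ explicitly starting from the Gr\"obner basis $\Gc$ of Theorem~\ref{closedfatemeh}. For each facet $F = \{a_1 < \cdots < a_m\} \in \mathcal{F}(\Delta)$ with $a_i = j$, the $S$-pair of $x_{ij}$ and $\mu_F$ reduces, after cancelling all $x_{ij}$-terms, to $\bar\mu_F := \mu_F|_{x_{ij}=0}$. The new leading monomial produced this way is (for $i < m$)
\[
u_F = x_{1,a_1}\cdots x_{i-1,a_{i-1}} \cdot x_{i,a_{i+1}}\, x_{i+1,a_i} \cdot x_{i+2,a_{i+2}}\cdots x_{m,a_m},
\]
with the analogous monomial $x_{1,a_1}\cdots x_{m-2,a_{m-2}} \cdot x_{m-1,a_m}\, x_{m,a_{m-1}}$ when $i = m$.

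The main obstacle is the combinatorial vertex-cover verification that the monomial ideal generated by $(x_{ij})$, the collection of $u_F$'s, and the unaffected original leading monomials $\ini_<(\mu_F)$ already has height $h+1$ in $S$. Here the hypothesis enters decisively: since any $m$ pairwise distinct cliques of $\Delta$ share no common vertex, $j$ belongs to at most $m-1$ cliques, and by closedness property (2) of the Introduction the variable $x_{ij}$ lies in the disjoint variable set $V_\ell$ of at most one $\ini_<(J_{\Delta_\ell})$. These constraints control the interaction between the $u_F$'s and the existing leading monomials tightly enough to force the height bump by one, completing the proof.
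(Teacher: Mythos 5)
Your reduction via Cohen--Macaulayness to the statement $\height(J_\Delta+(x_{ij}))=\height J_\Delta+1$ is sound and matches the paper's starting point, but the proof breaks down exactly where you acknowledge ``the main obstacle'': the lower bound on that height is never actually established. Two things are missing. First, the set consisting of $x_{ij}$, the polynomials $\mu_F|_{x_{ij}=0}$ for facets $F$ with $j\in F$ in position $i$, and the untouched $\mu_F$ is not shown to be a Gr\"obner basis of $J_\Delta+(x_{ij})$; one round of $S$-pair reduction against $x_{ij}$ produces the $\bar\mu_F$, but the new $S$-pairs among the $\bar\mu_F$'s and between $\bar\mu_F$ and the remaining $\mu_G$'s must also reduce to zero, and nothing guarantees this -- Theorem~\ref{closedfatemeh} only controls $S$-pairs of full minors. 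So even the generating set of $\ini_<(J_\Delta+(x_{ij}))$ you propose is unjustified. Second, granting that initial ideal, the vertex-cover computation showing its height is $h+1$ is simply asserted; the final sentence (``these constraints control the interaction\dots tightly enough'') is the entire content of the lemma and is not an argument. As written, the proof proves the easy inequality $\height(J_\Delta+(x_{ij}))\le h+1$ and nothing more.

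The paper avoids this combinatorial morass with a different device that you may want to compare against. Instead of adding the single variable $x_{ij}$, it adds the whole column $(x_{1j},\dots,x_{mj})$. Modulo that column, $J_\Delta$ becomes $J_{\Delta'}$ for the complex $\Delta'$ obtained by discarding all facets containing $j$, which is again closed, so its height is read off directly from Corollary~\ref{cm}: since $j$ lies in at most $s\le m-1$ cliques (this is where the hypothesis on $m$-fold intersections enters), one gets $\height(J_\Delta,x_{1j},\dots,x_{mj})=\height J_\Delta+m-s>\height J_\Delta$. No new Gr\"obner basis computation is needed. Cohen--Macaulayness (hence unmixedness) of $S/J_\Delta$ and prime avoidance over an infinite field then give that a \emph{generic} linear combination $\sum_k a_k x_{kj}$ is regular modulo $J_\Delta$, and finally the row-operation automorphism $x_{ik}\mapsto\sum_\ell a_\ell x_{\ell k}$, which fixes $J_\Delta$ because $J_\Delta$ is generated by maximal minors, transports regularity of the generic combination to regularity of the individual variable $x_{ij}$. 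If you want to salvage your approach you would need to carry out the Gr\"obner basis and height computations in full; otherwise the column-plus-genericity-plus-row-operations route is the efficient one.
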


\begin{proof} We may assume from the beginning that the field $K$ is infinite since neither the hypothesis nor the conclusion of the lemma is
affected by tensoring with a field extension of $K$. In order to show that $x_{ij}$ is regular modulo $J_\Delta$ we consider the ideal $$I=(J_\Delta,x_{1j}, \ldots,x_{mj}).$$ Let $\Delta'$ be the simplicial complex whose facets are those of $\Delta$ which do not contain $j$. Observe that $\Delta'$ is again closed, and that $I=(J_{\Delta'}, x_{1j}, \ldots,x_{mj})$. We use the formula in Corollary~\ref{cm} to compare the height of $I$ with that of $J_\Delta$. If $\Delta=\Delta_1\union\cdots \union \Delta_r$ is the clique decomposition of $\Delta$ with $n_i=|\Delta_i|$, then $\height J_\Delta=\sum_{i=1}^r(n_i-m+1)$.

We may assume that $\Delta_i$ contains the vertex $j$ for $i=1,\ldots,s$.  Our assumptions implies that $s\leq m-1$. Note that the clique decomposition of $\Delta'=\Delta_1'\union\cdots \union \Delta_r'$ where the facets of each $\Delta_i'$ are those facets of $\Delta_i$ which do not contain $j$. It follows that $|\Delta_i'|=|\Delta_i|-1=n_i-1$ for $i=1,\ldots,s$ and $\Delta_i'=\Delta_i$ for $i>s$. Hence we get
\begin{eqnarray*}
\height I&= & \height J_{\Delta'} +m = \sum_{i=1}^s(n_i-1-m+1)+ \sum_{i=s+1}^r(n_i-m+1)+m\\
&=&\height J_\Delta-s+m>\height J_\Delta.
\end{eqnarray*}
Our considerations show that $I/J_\Delta\subset S/J_\Delta$ has positive height. Since  $S/J_\Delta$ is Cohen--Macaulay  and $K$ is infinite, it follows that a  generic linear combination $a_1x_{1j}+a_2x_{2j}+\cdots +a_mx_{mj}$ of the variables $x_{1j}, \ldots,x_{mj}$ (whose residue classes  generate $I/J_\Delta$) is regular modulo $J_\Delta$. Since the above linear combination is generic, we may assume that $a_{i}=1$.

Now  we consider the linear automorphism $\varphi\: S \to S$ with $\varphi(x_{ik})= a_1x_{1k}+a_2x_{2k}+\cdots +a_mx_{mk}$ for $k=1,\ldots,n$ and $\varphi(x_{\ell k})=x_{\ell k}$ for $\ell\neq i$ and all $k$. Let $X'$ be the matrix whose entries are the elements $\varphi(x_{\ell k})$ for $\ell=1,\ldots,m$ and $k=1,\ldots, n$. Then $X'$ is obtained from $X$ by elementary row operations. It follows that $\varphi(J_\Delta)=J_\Delta$.

By  our choice of $\varphi$ we have that $y_{ij}= \varphi(x_{ij})$ is regular modulo $J_\Delta$. Since $J_\Delta=\varphi(J_\Delta)$ it follows that $x_{ij}= \varphi^{-1}(y_{ij})$ is regular modulo $\varphi^{-1}(J_\Delta)= \varphi^{-1}(\varphi(J_\Delta))=J_\Delta$, as desired.
\end{proof}

We do not know whether for a closed simplicial complex $\Delta$ the necessary condition for $J_\Delta$ to be a prime ideal given  in Theorem~\ref{viviana}  is also sufficient. For the moment we can only present a partial converse of this result.

\begin{Proposition}
\label{fatemehinverse}
Let $\Delta$ be a simplicial complex with clique decomposition $\Delta=\Delta_1\union\Delta_2\union \cdots \union \Delta_r$. Assume that all cliques are simplices of dimension $m-1$ and that
\begin{itemize}
\item[(1)] $|V(\Delta_r)\sect \cdots \sect V(\Delta_{r-s+1})|\leq m-s$ for $s=2,\ldots,r$;
\item[(2)] $V(\Delta_{i_1})\sect \cdots \sect V(\Delta_{i_s})\subset V(\Delta_r)\sect \cdots \sect V(\Delta_{r-s+1})$ for all subsets $\{i_1,\ldots,i_s\}\subset [r]$ of cardinality $s$ with  $2\leq s\leq r$.
\end{itemize}
Then $J_\Delta$ is a prime ideal.
\end{Proposition}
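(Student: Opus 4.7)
The plan is to induct on the number $r$ of cliques. The base case $r = 1$ is classical: $J_\Delta$ is then the ideal of maximal minors of a generic $m \times n_1$ matrix, which is prime.

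For the inductive step, set $\Delta' = \Delta_1 \cup \cdots \cup \Delta_{r-1}$ and first verify that $\Delta'$ satisfies the hypotheses with $r-1$ in place of $r$. Condition (1) for $\Delta'$ follows from (2) for $\Delta$ applied to $\{r-1,\ldots,r-s\}$: the resulting intersection sits inside $V(\Delta_r) \cap \cdots \cap V(\Delta_{r-s+1})$, whose size is at most $m-s$ by (1). For (2) on $\Delta'$, given $\{i_1,\ldots,i_s\} \subset [r-1]$, apply (2) for $\Delta$ first to $\{i_1,\ldots,i_s\}$ (so every common point lies in $V(\Delta_r)$) and then to the $(s+1)$-subset $\{i_1,\ldots,i_s,r\}$, yielding containment in $V(\Delta_r) \cap V(\Delta_{r-1}) \cap \cdots \cap V(\Delta_{r-s})$, and in particular in $V(\Delta_{r-1}) \cap \cdots \cap V(\Delta_{r-s})$. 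By induction, $J_{\Delta'}$ is prime.

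To bring in $\Delta_r$, choose $j \in V(\Delta_r) \setminus V(\Delta_{r-1})$, which is nonempty since $|V(\Delta_r) \cap V(\Delta_{r-1})| \leq m-2$ by (1). Condition (2) applied to any pair $\{r,i\}$ forces $V(\Delta_r) \cap V(\Delta_i) \subset V(\Delta_r) \cap V(\Delta_{r-1})$, so $j$ lies in no $V(\Delta_i)$ with $i < r$, and thus no generator of $J_{\Delta'}$ involves column $j$. The intersection hypothesis of Lemma~\ref{lasthope} is met by (1) and (2) with $s = m$ (vacuous if $r < m$), so $x_{mj}$ is regular modulo $J_\Delta$. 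Lemma~\ref{rules} with row $m$ and column $j$ gives
\[
(S/J_\Delta)_{x_{mj}} \;\cong\; \bigl(S/(J_{\Delta'} + (\delta))\bigr)_{x_{mj}},
\]
where $\delta = [1\ldots m{-}1 \,|\, V(\Delta_r)\setminus\{j\}]$ is the $(m-1)\times(m-1)$ minor obtained from $\mu_{V(\Delta_r)}$ by striking row $m$ and column $j$. Since $x_{mj}$ occurs in neither $J_{\Delta'}$ nor $\delta$, this localization is inessential: it suffices to prove that $J_{\Delta'}+(\delta)$ is prime in $S$, i.e., that $\delta$ generates a prime ideal in the domain $R = S/J_{\Delta'}$.

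This last step is the main obstacle. Partition $V(\Delta_r) \setminus \{j\} = W_1 \sqcup W_2$, with $W_1 = V(\Delta_r) \cap V(\Delta_{r-1})$ and $W_2$ the remaining columns; by (2), $W_2 \cap V(\Delta_i) = \emptyset$ for all $i < r$, so the $W_2$-column variables are fresh over $R_0 = S_0/J_{\Delta'}$, where $S_0 \subset S$ is the polynomial subring in all other variables. Since the contraction of the prime ideal $J_{\Delta'}$ to $S_0$ is itself, $R_0$ is a domain and $R = R_0[x_{ik} : i\in[m],\, k\in W_2]$ is a polynomial ring over $R_0$; in particular $|W_2|\geq 1$ because $|W_1|\leq m-2$. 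Laplace expansion of $\delta$ along the $W_2$-columns gives
\[
\delta \;=\; \sum_{I\subset[m-1],\,|I|=|W_2|} \varepsilon_I\,[I\,|\,W_2]\cdot[[m-1]\setminus I\,|\,W_1],
\]
presenting $\delta$ as an $R_0$-linear combination of the fresh-variable minors $[I|W_2]$, with coefficients equal (up to sign) to the $|W_1|\times|W_1|$ minors of the $[m-1]\times W_1$ submatrix viewed in $R_0$. The technical heart of the proof is to deduce from this that $\delta$ is an irreducible (equivalently prime) element of $R$; my approach would be to show first that the ideal generated by these coefficient minors has positive height in $R_0$---which follows from the rank $m-1$ of the $V(\Delta_{r-1})$-submatrix at the generic point of the irreducible variety $V(J_{\Delta'})$---and then to exclude any nontrivial factorization of $\delta$ in the polynomial ring $R$ by a degree argument in the $W_2$-variables.
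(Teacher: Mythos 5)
Your inductive step on $r$ stalls exactly where you say it does, and the obstacle is not merely technical. Having reduced to showing that the single $(m-1)$-minor $\delta$ generates a prime ideal in the domain $R=S/J_{\Delta'}$, you propose to prove that $\delta$ is ``irreducible (equivalently prime)''. That equivalence is false in general: it holds only in factorial (or at least GCD) domains, and $R$ --- a complete intersection cut out by $r-1$ maximal minors --- is not known to be, and in general will not be, a UFD. So even if the degree argument in the $W_2$-variables excluded a nontrivial factorization of $\delta$, you would only get irreducibility, which does not yield primality of $(\delta)$; and the degree argument itself is only gestured at. A second, smaller gap: to invoke Lemma~\ref{lasthope} you need $\Delta$ to be \emph{closed}, not just the empty-intersection condition on $m$ cliques, and you never produce a labeling with respect to which $\Delta$ is closed. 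This is fixable (primality is invariant under relabeling of columns), but it must be done: the paper devotes the first part of its proof to writing down an explicit labeling $V(\Delta_\ell)=\{a_{\ell 1}<\cdots<b_1<\cdots<b_k<c_{\ell 1}<\cdots\}$ derived from hypotheses (1) and (2) which makes $\Delta$ closed.

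The paper avoids your hard step entirely by inducting on $m$ rather than on $r$: it localizes at $x_{m b_1}$, where $b_1$ is a vertex lying in \emph{every} clique, so that Lemma~\ref{rules} simultaneously strips one row and one column from \emph{all} of the generating minors, producing the determinantal facet ideal of an $(m-2)$-dimensional complex $\Delta'$ that again satisfies hypotheses (1) and (2); the induction closes with the $m=2$ case from the binomial edge ideal paper. By contrast, your choice of a column $j$ belonging only to $\Delta_r$ shrinks only the one minor $f_r$, leaving you with the genuinely difficult problem of adjoining a single minor to a determinantal complete intersection and preserving primality. Your verification that $\Delta'=\Delta_1\cup\cdots\cup\Delta_{r-1}$ inherits conditions (1) and (2) is correct, but the argument as a whole is incomplete at its decisive step.
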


\begin{proof}
We make induction on $m$.
The initial step, $m=2$, is already known \cite{HHH}. Assume that $|V(\Delta_1)\cap\cdots\cap V(\Delta_r)|=k$.
We consider the following labeling on the vertices of $\Delta$ such that
\begin{eqnarray*}
V(\Delta_\ell)=\{
a_{\ell 1}<\cdots< a_{\ell, m-k-\ell+1}<b_{1}<\cdots<b_k<c_{\ell 1}<\cdots<c_{\ell, \ell-1}\}
\end{eqnarray*}
for all $\ell=1,\ldots,r$, where the numbers $a_{ij}$ are pairwise distinct, and for each $s=2,\ldots,r$ we choose $c_{ij}$ such that
$$c_{r j}=c_{r-1, j}=\cdots=c_{r-s+1, j},$$ for $j=1,\ldots,|V(\Delta_r)\sect \cdots \sect V(\Delta_{r-s+1})|-k$.

Then with respect to this labeling, $\Delta$ is closed and so $x_{mb_1}$ is a regular element modulo $J_\Delta$ by Lemma~\ref{lasthope}.
It follows from Lemma~\ref{rules} that $(S/J_\Delta)_{x_{mb_1}}\iso (S/L)_{x_{mb_1}}$ where $L=\sum_{i=1}^r L_i$. Since $x_{m b_1}$ is regular modulo $J_\Delta$,  $J_\Delta$ is a prime ideal if and only if $L_{x_{m b_1}}$ is a prime ideal.
Here $L_i$ is generated by the minor
$$[1\ldots m-1|a_{i 1}\ldots  a_{i, m-k-i+1}b_{2} \ldots b_k c_{i1}\ldots c_{i,i-1}].$$

Let $\Delta'$ be the $(m-2)$-simplicial complex with the clique decomposition $\Delta'_1\cup\cdots\cup\Delta'_t$, where $\Delta'_i=\Delta_i\setminus\{b_1\}$.
Note that  conditions $(1)$ and $(2)$ hold for $\Delta'$. Therefore $L=J_{\Delta'}$ is a prime ideal by the inductive hypothesis.
\end{proof}

\begin{Example}
\label{necessary}
\rm{
Let $\Delta=\Delta_1\cup\cdots\cup \Delta_r$ with the assumption of Theorem~\ref{fatemehinverse}. Then we can describe the vertices of each
$\Delta_i$ in a nice way as the $i^{\rm th}$ row of a simple matrix. As an example let $m=6, r=4$, $|V(\Delta_4)\cap V(\Delta_3)|=3$,
$|\bigcap_{i=2}^4 V(\Delta_i)|=3$ and $|\bigcap_{i=1}^4 V(\Delta_i)|=2$. Then by the proof of the above theorem  we get
\[
\left(
  \begin{array}{cccccccccc}
    1 & 2 & 3 & 4 & b_1 & b_2 \\
    5 & 6 & 7 & b_1 & b_2 & c_{1} \\
    8 & 9 & b_1 & b_2 & c_1 & c_2 \\
    10 & b_1 & b_2 & c_1 & c_{3} & c_{4} \\
     \end{array}
\right)
\]
which describes the labels of the $\Delta_1,\ldots,\Delta_4$.
}
\end{Example}

\begin{Example}
{\rm
Let $\mathcal{F}(\Delta)=\{\{1,2,3\},\{1,4,5\},\{3,5,6\},\{2,4,6\}\}$. Then one may check with \textsc{Singular} \cite{GPS} that $J_\Delta$ is not a prime ideal. However, the intersection condition of Theorem~\ref{viviana} holds for $\Delta$. Thus for a converse of Theorem~\ref{viviana} one should require that $\Delta$ is a closed simplicial complex.

This is also an example of a determinantal facet ideal whose initial ideal with respect to the lexicographic order is not squarefree though $J_\Delta$ is a radical ideal.
}
\end{Example}

\section{Special  classes  of prime  determinantal facet ideals}

Let $\Delta$ a  pure  simplicial complex of dimension $m-1\geq 2$ and $\Delta=\Delta_1\cup\ldots\cup\Delta_r$ its clique decomposition. In this section we pose the following intersection properties on the cliques of $\Delta$:
\begin{enumerate}
\item[(i)] $|V(\Delta_i)\sect V(\Delta_j)|\leq 1$ for all $i< j$;
\item[(ii)] $V(\Delta_i)\cap V(\Delta_j)\cap V(\Delta_k)=\emptyset$ for all $i<j<k$.
\end{enumerate}
Theorem~\ref{viviana} implies that for $m=3$ the conditions (i) and (ii) are satisfied whenever $J_\Delta$ is a prime ideal, and that for any $m\geq 3$ these two conditions imply the intersection conditions formulated in Theorem~\ref{viviana}.

In this section we will show  that whenever  $\Delta$ is closed, the conditions  (i) and (ii) imply primality of $J_\Delta$ under some additional assumptions depending on a graph which we are going to define now.

For the simplicial complex with the properties (i) and (ii), we let $G_\Delta$ be the simple graph  with vertex set  $V(G_\Delta)=\{v_1,\ldots,v_r\}$ and edge set
\[
E(G_\Delta)=\{\{v_i,v_j\}\:\;  V(\Delta_i)\sect V(\Delta_j)\neq \emptyset\}.
\]
In the following the phrase ``$\Delta$ is a simplicial complex with graph $G_\Delta$" will always imply that $\Delta$ satisfies the conditions (i) and (ii) (because otherwise $G_\Delta$ is not defined).

At present we are able to prove primality of $J_\Delta$ for certain classes of simplicial complexes $\Delta$ only under the additional assumption that these complexes are closed. The next lemma provides a necessary  condition for a simplicial complex to be closed.

\begin{Lemma}
\label{order}
Let $\Delta$ be a closed simplicial complex with graph $G_\Delta$. Then each vertex $v_i$ of $G_\Delta$ has order at most $\min\{|V(\Delta_i)|,2\dim(\Delta)\}$.
\end{Lemma}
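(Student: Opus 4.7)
The plan is to interpret the ``order'' of $v_i$ as its degree in $G_\Delta$, reduce this degree to counting a subset $S_i \subseteq V(\Delta_i)$ of ``shared'' vertices, and then exploit the closed labeling to bound $|S_i|$.

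First, I would use (i) to observe that for each edge $\{v_i, v_j\}$ of $G_\Delta$ the intersection $V(\Delta_i) \cap V(\Delta_j)$ consists of exactly one vertex $v_{ij}$; and (ii) to observe that the assignment $j \mapsto v_{ij}$ is injective, since a coincidence $v_{ij} = v_{ij'}$ for $j \neq j'$ would force the common vertex into a triple intersection. Hence the degree of $v_i$ equals $|S_i|$, where $S_i$ is the set of vertices of $V(\Delta_i)$ that also lie in some other clique, and the bound $\deg(v_i) \leq |V(\Delta_i)|$ is immediate.

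For the bound $\deg(v_i) \leq 2(m-1) = 2\dim(\Delta)$, fix the closed labeling and write $V(\Delta_i) = \{u_1 < u_2 < \cdots < u_{n_i}\}$. Since $\Delta_i = \Gamma_i^{(m-1)}$ is the full $(m-1)$-skeleton of a simplex, every $m$-subset of $V(\Delta_i)$ is a facet, so $u_k$ occupies position $\ell$ in some facet of $\Delta_i$ precisely when $\ell$ lies in
\[
P_{u_k,i} \;=\; \bigl[\max(1,\, m - n_i + k),\; \min(m,\, k)\bigr] \;\subseteq\; [m].
\]
If $u_k \in S_i$, say $u_k \in V(\Delta_i) \cap V(\Delta_j)$ for some $j \neq i$, then by characterization (1) of closedness recalled in the introduction, the analogous nonempty set $P_{u_k, j}$ is disjoint from $P_{u_k, i}$, whence $P_{u_k,i} \subsetneq [m]$. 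Examining the endpoints, this proper containment forces $\min(m,k) < m$ or $\max(1, m - n_i + k) > 1$, i.e.\ $k \leq m - 1$ or $k \geq n_i - m + 2$.

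Consequently $S_i \subseteq \{u_1, \ldots, u_{m-1}\} \cup \{u_{n_i - m + 2}, \ldots, u_{n_i}\}$, a set of at most $2(m-1)$ elements. Taking the minimum of the two bounds gives $\deg(v_i) \leq \min\{|V(\Delta_i)|, 2\dim(\Delta)\}$, as required. The main (and essentially only) nontrivial step is translating closedness into the position-disjointness $P_{u_k,i} \cap P_{u_k,j} = \emptyset$ for a shared vertex; once that is in place, the counting of admissible values of $k$ is straightforward.
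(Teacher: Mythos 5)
Your proposal is correct and follows essentially the same route as the paper's proof: both count, for each vertex $u_k$ of $V(\Delta_i)$, the set of positions it can occupy in a facet of the clique, observe that closedness (characterization (1)) forces a vertex shared with another clique to miss at least one position, and then use conditions (i) and (ii) to identify the degree of $v_i$ with the number of shared vertices. The only difference is one of detail: you make explicit the interval $P_{u_k,i}$ and the resulting bound of $2(m-1)$ candidates, whereas the paper states the count of vertices not taking all $m$ positions without spelling out the closedness step.
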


\begin{proof}
We say that a vertex $\ell\in \Delta_i$ takes the position $s$ if there is an $(m-1)$-dimensional face $\{a_1<a_2<\cdots < a_m\}$ of $\Delta_i$ such that $\ell=a_s$. In the clique $\Delta_i$ there are exactly $\min\{|V(\Delta_i)|,2\dim(\Delta)\}$ vertices which do not take all $m$ positions. On the other hand the assumption (ii) implies that each of these vertices can intersect with at most one clique $\Delta_{j}$, (where $v_j$ is a neighbor of $v_i$) which completes the proof.
\end{proof}

Now we are ready to consider  primality of $J_\Delta$ for special classes of simplicial complexes.

\begin{Theorem}
\label{tree}
Let $\Delta$ be simplicial complex such that $G_\Delta$ is a tree. Then
\begin{enumerate}
\item[(a)] $J_\Delta$ is a prime ideal, if $\Delta$ is closed;

\item[(b)] $\Delta$ is closed, if and only if each vertex of $G_\Delta$ has order at most $\min\{|V(\Delta_i)|,2\dim(\Delta)\}$.
\end{enumerate}
\end{Theorem}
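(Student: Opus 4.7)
For part (a), my plan is to induct on the number $r$ of cliques in the clique decomposition of $\Delta$. When $r=1$, the complex $\Delta=\Delta_1$ is the full $(m-1)$-skeleton of a simplex on $V(\Delta_1)$, so $J_\Delta$ is the ideal of maximal minors of an $m\times n_1$ generic submatrix of $X$, which is prime by the classical theorem on maximal minor ideals.

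For the inductive step with $r\geq 2$, the tree $G_\Delta$ has a leaf; after relabeling I may assume $\Delta_r$ is a leaf attached to $\Delta_{r-1}$ at the unique shared vertex $v$, and by condition (ii) we have $v\notin V(\Delta_i)$ for $i\leq r-2$. Conditions (i) and (ii) imply the hypothesis of Lemma~\ref{lasthope}, so $x_{mv}$ is regular modulo $J_\Delta$, and primality of $J_\Delta$ is equivalent to primality of $J_\Delta S_{x_{mv}}$. Applying Lemma~\ref{rules} with $i=m$ and $j=v$ (valid because every maximal minor uses every row), I obtain $(S/J_\Delta)_{x_{mv}}\cong (S/L)_{x_{mv}}$, where $L$ is generated by the minors $\mu_F$ for $F\in\mathcal{F}(\Delta)$ with $v\notin F$ together with the $(m-1)$-minors $[1\ldots m-1\mid F\setminus\{v\}]$ for $F\in\mathcal{F}(\Delta)$ with $v\in F$.

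Only $\Delta_r$ and $\Delta_{r-1}$ contain $v$, and since these are cliques the resulting $(m-1)$-minor generators exhaust the maximal minors of the submatrices $Y_r$ and $Y_{r-1}$ formed from the first $m-1$ rows and columns $V(\Delta_r)\setminus\{v\}$, respectively $V(\Delta_{r-1})\setminus\{v\}$. The $m$-minors coming from facets of $\Delta_r$ or $\Delta_{r-1}$ not containing $v$ lie in these $(m-1)$-minor ideals by Laplace expansion along the last row, and are absorbed. Hence
\[
L=J_{\Delta_1}+\cdots+J_{\Delta_{r-2}}+I_{m-1}(Y_{r-1})+I_{m-1}(Y_r).
\]
Since $V(\Delta_r)\setminus\{v\}$ is disjoint from the vertex sets of $\Delta_1,\ldots,\Delta_{r-1}$, the summand $I_{m-1}(Y_r)$ lives in a set of variables disjoint from the rest, so it splits off as a tensor factor and is prime by the classical theorem on maximal minors. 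The main obstacle is primality of the remainder $L'=J_{\Delta_1}+\cdots+J_{\Delta_{r-2}}+I_{m-1}(Y_{r-1})$, which is not itself a determinantal facet ideal in our setup because its generators mix $m$-minors and $(m-1)$-minors. My plan here is to iterate the localization: for each remaining edge $\{v_{r-1},v_{i_k}\}$ of the subtree rooted at $v_{r-1}$, localize at the variable in row $m-1$ corresponding to the shared vertex with $\Delta_{i_k}$ (which lies in every remaining generator's row set). Each such step trims the row count attached to one more clique by one, and since $G_\Delta$ is a tree the procedure terminates with an ideal that decomposes as a tensor product of classical maximal minor ideals in pairwise disjoint variable sets, hence is prime. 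Equivalently, one may strengthen the inductive hypothesis to cover such mixed-row determinantal facet ideals attached to trees.

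For part (b), the forward direction is exactly the content of Lemma~\ref{order}. For the converse, given the degree bound I would construct a closed labeling constructively: root the tree $G_\Delta$ at an arbitrary clique and process it by depth-first search. At each clique $\Delta_i$, the vertex shared with the parent (if present) already carries a label, and I place the remaining vertices so that the shared vertices with the children occupy the extremal ranks of $V(\Delta_i)$; such an extremal rank forces the position range of that vertex in $\Delta_i$ to be an initial segment $\{1,\ldots,k\}$ or a terminal segment $\{m-k+1,\ldots,m\}$, which can then be made disjoint from the complementary range used on the other side of the edge. The bound $\deg_{G_\Delta}(v_i)\leq\min\{n_i,2(m-1)\}$ ensures there are always enough extremal ranks to accommodate every neighbor of $v_i$ in $G_\Delta$, and the tree structure precludes cyclic conflicts. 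The delicate point is to guarantee at each step that the parent's preassigned position range for the shared vertex leaves at least one extremal slot free at the opposite end of the interval; this is arranged by the convention of always placing the parent's shared vertex at the opposite boundary from those chosen for the children.
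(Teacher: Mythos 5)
Your setup for part (a) --- induction on the number of cliques, removing a leaf of $G_\Delta$, invoking Lemma~\ref{lasthope} for regularity of the variable at the cut vertex and Lemma~\ref{rules} to compute the localization, splitting off the maximal-minor ideal of the leaf clique as a prime tensor factor in disjoint variables --- is exactly the paper's, and your identification of $L$ (including the absorption of the $m$-minors not containing $v$ via Laplace expansion) is correct. The genuine gap is in how you finish. You leave the mixed ideal $L'=J_{\Delta_1}+\cdots+J_{\Delta_{r-2}}+I_{m-1}(Y_{r-1})$ to an iterated localization along the remaining edges of the tree, and that sketch does not go through as stated. Two concrete obstacles: (i) each further localization must be at a variable that is regular modulo the \emph{current} mixed ideal, and Lemma~\ref{lasthope} only covers determinantal facet ideals of closed complexes, so you have no regularity statement for these intermediate ideals (the variable $x_{m-1,w}$ at a shared vertex $w$ genuinely occurs in generators of $L'$, so regularity is not free); (ii) Lemma~\ref{rules} requires every generator to involve the chosen row, and after one more step you accumulate minors with row sets $\{1,\dots,m\}$, $\{1,\dots,m-1\}$, $\{1,\dots,m-2\}$ and $\{1,\dots,m-2,m\}$, so the common-row hypothesis degrades as you descend the tree. ``Strengthen the inductive hypothesis to cover mixed-row determinantal facet ideals'' is precisely the hard part, and it is not carried out.

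The paper closes the induction with a trick you are missing: localize a \emph{second} time at the \emph{same} variable $x_{mk}$ ($k$ the cut vertex). Since no generator of $L'$ involves $x_{mk}$ (the $(m-1)$-minors use only rows $1,\dots,m-1$, and by condition (ii) no other clique meets column $k$), $x_{mk}$ is trivially regular modulo $L'$; and applying Lemma~\ref{rules} to $J_{\Delta'}$, where $\Delta'=\Delta_2\union\cdots\union\Delta_r$ is again a closed complex whose graph is a tree with one fewer vertex, yields $(S/L')_{x_{mk}}\iso (S/J_{\Delta'})_{x_{mk}}$. By the induction hypothesis $J_{\Delta'}$ is prime, so $S/L'$ embeds into a domain and $L'$ is prime. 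This keeps the entire induction inside the class of determinantal facet ideals and avoids mixed ideals altogether; you should replace your iteration by this step. Your part (b) is essentially the paper's argument (Lemma~\ref{order} for necessity, and a root-to-leaf labeling using extremal ranks for sufficiency), at about the same level of detail as the paper's own sketch.
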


Let $\{i_1<\ldots <i_s\}\subset [m]$ and  $\{j_1<\ldots <j_t\}\subset [n]$. We denote by $X^{j_1j_2\ldots j_t}_{i_1\ldots i_s}$  the submatrix of $X$ with rows $i_1,\ldots,i_s$ and columns $j_1,\ldots, j_t$.  Observe that  Lemma~\ref{rules} implies the well-known fact that if $I$ is generated by all $m$-minors of $X^{j_1\ldots j_t}_{1\ldots m}$, then $I_{x_{ij_k}}$ is generated by all  $(m-1)$-minors of the matrix  $X^{j_1\ldots \hat{j}_k\ldots j_t}_{1\ldots \hat{i}\ldots m}$.

\begin{proof}[Proof of Theorem~\ref{tree}]
(a) We may assume that $\Delta$ is a connected $(m-1)$-dimensional simplicial complex and that $\Delta=\Delta_1\union \Delta_2\union\cdots \union \Delta_r$ is the clique decomposition of $\Delta$. The proof is by induction on the number of cliques of $\Delta$ (which is the number of vertices of $G_\Delta$). We may assume that $v_1$ is a vertex of degree one in $G_\Delta$ and that $v_2$ is its neighbor.  Then $\Delta_1$ intersects with just one clique, namely  $\Delta_2$.

Let $V(\Delta_1)=\{j_1,\ldots,{j_t}\}$  and $V(\Delta_2)=\{\ell_1,\ldots,{\ell_s}\}$ with $m\leq t,s$. We may assume that $V(\Delta_1)\sect V(\Delta_2)=\{k\}$ where $k=j_1=\ell_1$. Since $\Delta$ is closed, by Lemma~\ref{lasthope} we know that $x_{mk}$ is regular modulo $J_\Delta$. It follows from Lemma~\ref{rules} that $(S/J_\Delta)_{x_{mk}}\iso (S/L)_{x_{mk}}$ where $L=L_1+L_2+ \sum_{i=3}^rJ_{\Delta_i}$.  Here $L_1$ is generated by all $(m-1)$-minors of the matrix $X_{1\ldots m-1}^{j_2\ldots j_t}$, and $L_2$ is generated by all $(m-1)$-minors of the matrix $X_{1\ldots m-1}^{\ell_2\ldots \ell_s}$. The generators of $L_1$ are polynomials in a set of variables disjoint from those of $L'=L_2+ \sum_{i=3}^rJ_{\Delta_i}$. It is known  that $L_1$ is a prime ideal, see \cite[Theorem 7.3.1]{BH}. Thus $L$ is a prime ideal if and only $L'$ is a prime ideal. To see this observe that $(S/L')_{x_{mk}}\iso(S/J_{\Delta'})_{x_{mk}}$ where $\Delta'$ is the closed simplicial complex with clique decomposition  $\Delta'=\Delta_2\union\cdots\union \Delta_r$. By induction hypothesis, $J_{\Delta'}$ is a prime ideal. Hence $(S/L')_{x_{mk}}\iso (S/J_{\Delta'})_{x_{mk}}$ which implies that $(J_{\Delta'})_{x_{mk}}$ is a prime ideal. Since the generators of $L'$ are polynomials in variables different from  $x_{mk}$, it follows that $x_{mk}$ is regular modulo $L'$. Consequently $L'$ is a prime ideal.

(b) Due to Lemma~\ref{order} it suffices to show that $\Delta$ is closed  if each vertex of $G_\Delta$ has order at most $\min\{|V(\Delta_i)|,2\dim(\Delta)\}$.  We prove the assertion by induction on $r$. As before we assume that   $\Delta_1$ intersects with just one clique, namely  $\Delta_2$. By induction    it follows that $\Delta'=\Delta_2 \union\cdots \union \Delta_r$ is closed. Our assumption on the order of the vertices of $G_\Delta$ implies that $\Delta_2$ has at most $\min\{|V(\Delta_i)|,2\dim(\Delta)\}-1$ intersection points in $\Delta'$.

Hence among the vertices of $\Delta_2$ which are not intersection points in $\Delta'$ there is at least one which does not take all $m$ positions, say it misses the  $k^{\rm th}$ position. By symmetry we may assume this vertex is the intersection point with $\Delta_1$. Now we may label $\Delta_1$ such that the vertex in the intersection point does not have position $k$ for any facet of  $\Delta$ in $\Delta_1$.
\end{proof}

\begin{Theorem}
\label{cycle}
Let $\Delta$ be a simplicial complex such that $G_\Delta$ is a cycle. Then $J_\Delta$ is a prime ideal.
\end{Theorem}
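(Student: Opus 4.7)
The plan is to peel off the clique $\Delta_1$ from the cycle by two successive localizations, reducing the problem to the tree case handled in Theorem~\ref{tree}. We may assume $r\geq 3$. Set $k_1=V(\Delta_1)\sect V(\Delta_2)$ and $k_r=V(\Delta_1)\sect V(\Delta_r)$; these are the two intersection points of $\Delta_1$ with its cyclic neighbors, and they are distinct by condition (ii). Since $\Delta$ satisfies (i) and (ii), Lemma~\ref{lasthope} applies and guarantees that every indeterminate, in particular $x_{m\,k_1}$ and $x_{m-1\,k_r}$, is regular modulo $J_\Delta$.

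First, I would apply Lemma~\ref{rules} with $i=m$, $j=k_1$ to obtain $(S/J_\Delta)_{x_{m\,k_1}}\iso (S/L)_{x_{m\,k_1}}$, where
\[
L=L_1+L_2+\sum_{i=3}^{r} J_{\Delta_i}.
\]
Here $L_1$ and $L_2$ are the ideals of all $(m-1)$-minors of $X^{V(\Delta_1)\setminus\{k_1\}}_{1,\dots,m-1}$ and $X^{V(\Delta_2)\setminus\{k_1\}}_{1,\dots,m-1}$ respectively (the $m$-minors arising from facets not containing $k_1$ are absorbed via Laplace expansion along row $m$); the generators $J_{\Delta_i}$ with $i\geq 3$ are unaffected since $k_1\notin V(\Delta_i)$ by condition (ii). Next I would apply Lemma~\ref{rules} to $L$ with $i=m-1$, $j=k_r$, which is legitimate because every generator of $L$ involves row $m-1$, while only generators of $L_1$ (where $k_r\in V(\Delta_1)\setminus\{k_1\}$) and of $J_{\Delta_r}$ involve column $k_r$. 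A further Laplace expansion collapses the $\Delta_1$-contribution to the ideal $L_1''$ of all $(m-2)$-minors of $X^{V(\Delta_1)\setminus\{k_1,k_r\}}_{1,\dots,m-2}$, and the $\Delta_r$-contribution to the ideal $J_{\Delta_r}''$ of all $(m-1)$-minors of $X^{V(\Delta_r)\setminus\{k_r\}}_{\{1,\dots,m\}\setminus\{m-1\}}$. Denote the resulting ideal by $L''=L_1''+R''$.

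The crux is that $L_1''$ and $R''$ involve disjoint sets of indeterminates: the columns of $L_1''$ are indexed by $V(\Delta_1)\setminus\{k_1,k_r\}$, which by conditions (i) and (ii) meets no $V(\Delta_j)$ with $j\neq 1$. Moreover $L_1''$ is the classical ideal of maximal minors of an $(m-2)\times(|V(\Delta_1)|-2)$ generic matrix, hence prime. For $R''$, reversing the two applications of Lemma~\ref{rules} (each $J_{\Delta_i}$ with $3\leq i\leq r-1$ is untouched by both) produces
\[
(S/R'')_{x_{m\,k_1}x_{m-1\,k_r}}\iso (S/J_{\Delta'})_{x_{m\,k_1}x_{m-1\,k_r}}, \qquad \Delta'=\Delta_2\union\cdots\union\Delta_r.
\]
Since $\Delta'$ inherits closedness from $\Delta$ and $G_{\Delta'}$ is the path $v_2,\dots,v_r$, Theorem~\ref{tree} gives that $J_{\Delta'}$ is prime; combined with Lemma~\ref{lasthope} applied to $\Delta'$, we conclude that $R''$ is prime in the localization. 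Hence $L''$ is a sum of two prime ideals in disjoint variables and is therefore prime. Tracing the isomorphisms back through the two localizations and using the regularity statements modulo $J_\Delta$, we conclude that $J_\Delta$ is prime.

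The principal difficulty will lie in the bookkeeping for the second localization: one has to check that every generator of $L$ still involves row $m-1$, verify that the Laplace-expansion collapses produce exactly the classical determinantal ideals $L_1''$ and $J_{\Delta_r}''$ claimed above, establish the disjoint-variable splitting, and ensure that undoing both applications of Lemma~\ref{rules} on the right-hand piece genuinely recovers $J_{\Delta'}$ without disturbing the middle generators $J_{\Delta_i}$ with $3\leq i\leq r-1$.
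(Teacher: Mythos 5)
Your overall strategy --- localize at the two attaching vertices of $\Delta_1$, collapse its contribution to a classical ideal of maximal minors in its own set of variables, and reduce the remaining piece to the path case of Theorem~\ref{tree} --- is exactly the paper's argument, and your bookkeeping for the two applications of Lemma~\ref{rules} (the Laplace-expansion absorptions, the check that every generator of $L$ still involves row $m-1$, and the disjoint-variable splitting $L''=L_1''+R''$) is correct. You even streamline one step: by keeping all regularity statements at the level of $J_\Delta$, you can chain the injections $S/J_\Delta\hookrightarrow(S/J_\Delta)_{x_{m k_1}x_{m-1,k_r}}\iso(S/L'')_{x_{m k_1}x_{m-1,k_r}}$ directly, whereas the paper first proves that the generators of $L$ form a Gr\"obner basis precisely in order to show that the second variable is regular modulo $L$ itself.

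There is, however, one genuine gap: closedness. Theorem~\ref{cycle} does not assume that $\Delta$ is closed --- only conditions (i) and (ii) via the convention on $G_\Delta$ --- yet your very first step invokes Lemma~\ref{lasthope}, whose hypotheses include closedness, and later you write that ``$\Delta'$ inherits closedness from $\Delta$'' before applying Theorem~\ref{tree}(a), which also requires it. Conditions (i) and (ii) do not give closedness for an arbitrary labeling (Figure~\ref{Fig1}(c) already shows this for two cliques meeting in one vertex), so you must first exhibit a labeling with respect to which $\Delta$ is closed; since relabeling the vertices only permutes the columns of $X$, primality of $J_\Delta$ is unaffected, so this is harmless once done --- but it must be done. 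The paper does it explicitly: take $V(\Delta_1)=\{1,\dots,a_1\}$, $V(\Delta_2)=\{a_1,\dots,a_2\}$, \dots, $V(\Delta_r)=\{a_1-1,a_{r-1},\dots,a_r\}$, so that every shared vertex sits in position $1$ in one of its two cliques and in position $m$ (or $m-1$, in the case of $a_1-1$ inside $\Delta_1$) in the other; these positions never coincide because $m\geq 3$. With that construction inserted at the start, your proof goes through and coincides with the paper's.
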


\begin{proof}
Let $\Delta=\Delta_1\cup\ldots\cup \Delta_r$ be the clique decomposition of $\Delta$.
We consider the labeling on the vertices of $\Delta$ such that
\begin{eqnarray*}
&&V(\Delta_1)=\{1,2,\ldots,a_1\},
V(\Delta_2)=\{a_1,a_1+1,\ldots,a_2\},\ldots,
\\&&
V(\Delta_{r-1})=\{a_{r-2},a_{r-2}+1,\ldots,a_{r-1}\},
V(\Delta_r)=\{a_1-1,a_{r-1},a_{r-1}+1,\ldots,a_r\},
\end{eqnarray*}
where $1<a_1<\cdots<a_{r-1}<a_r=n$. Then $\Delta$ is closed with respect to the given labeling and, by Lemma~\ref{lasthope},  $x_{1a_1}$ is a regular element modulo $J_\Delta$.
It follows from Lemma~\ref{rules} that $(S/J_\Delta)_{x_{1a_1}}\iso (S/L)_{x_{1a_1}}$ where $L=L_1+L_2+ \sum_{i=3}^rJ_{\Delta_i}$.  Here $L_1$ is generated by all $(m-1)$-minors of the matrix $X_{2\ldots m}^{1\ldots a_1-1}$ and $L_2$  is generated by all $(m-1)$-minors of the matrix $X_{2\ldots m}^{a_1+1\ldots a_2}$.  Therefore, $J_\Delta$ is a prime ideal, if $L_{x_{1a_1}}$ is a prime ideal. Since the generators of $L$ are polynomials in variables different from $x_{1a_1}$, we conclude that $x_{1a_1}$ is regular modulo $L$. Hence $J_\Delta$ is a prime ideal if and only if $L$ is a prime ideal.

We first show that the generators of $L$ form a Gr\"obner basis for $L$.
In order to show this, note that the generators of $\sum_{i=3}^rJ_{\Delta_i}=J_{\Delta_3\union\cdots\union \Delta_r}$ form a Gr{\"o}bner basis for $\sum_{i=3}^rJ_{\Delta_i}$, since $\Delta_3\union\cdots\union \Delta_r$ is closed. Also
the generators of $L_1$ form a Gr{\"o}bner basis for $J_{\Gamma_1}$, where $\Gamma_1$ is the pure $(m-2)$-dimensional simplicial complex on the vertices $\{1,\ldots,a_1-1\}$,
and
the generators $L_2$ form a Gr{\"o}bner basis for $J_{\Gamma_2}$, where $\Gamma_2$ is the pure $(m-2)$-dimensional simplicial complex on the vertices $\{a_1+1,\ldots,a_2\}$. Finally, we note that the initial ideals of $L_1$, $L_2$ and, respectively,  $\sum_{i=3}^rJ_{\Delta_i}$, are minimally generated by monomials in pairwise disjoint sets of variables. Consequently, the generators of $L$ form indeed a Gr\"obner basis.

Next observe that the variable  $x_{m-1,a_1-1}$ does not appear in the support of the generators of $\ini_<(L)$. In particular, $x_{m-1,a_1-1}$ is regular modulo $L$.  By using Lemma~\ref{rules} we get
$(S/L)_{x_{m-1,a_1-1}}\iso (S/L'_1+L_2+L_r+\sum_{i=3}^{r-1}J_{\Delta_i})_{x_{m-1,a_1-1}}$, where
$L'_1$ is generated by all $(m-2)$-minors of the matrix
$X_{2\ldots m-2, m}^{1\ldots a_1-2}$, and $L_r$  is generated by all $(m-1)$-minors of the matrix $X_{1\ldots m-2, m}^{a_{r-1}\ldots a_r}$.

Since the generators of $L'=L'_1+L_2+L_r+\sum_{i=3}^{r-1}J_{\Delta_i}$ are polynomials in variables different from $x_{m-1,a_1-1}$, we conclude that $x_{m-1,a_1-1}$ is regular modulo $L'$. Hence $L_{x_{m-1,a_1-1}}$ is a prime ideal if and only if $L'$ is a prime ideal.

Since $L'_1$ is a prime ideal and the generators of $L_1'$ are polynomials in variables different from the variables of the other summands, in order to show that $L'$ is prime, it is enough to show that $C=L_2+L_r+\sum_{i=3}^{r-1}J_{\Delta_i}$ is a prime ideal.

We define the pure $(m-1)$-simplicial complex $\Delta'$ to be the simplicial complex with  clique decomposition
$\Delta'=\Delta_2\union\cdots\union \Delta_r$. Since the associated graph of $\Delta'$ is a tree we know from Theorem~\ref{tree} that $J_{\Delta'}$ is a prime ideal. Since  $(S/C)_{x_{1a_1}x_{m-1,a_1-1}}\iso(S/J_{\Delta'})_{x_{1a_1}x_{m-1,a_1-1}}$ and since ${x_{1a_1}}x_{m-1,a_1-1}$ is regular modulo $C$, the desired conclusion follows.
\end{proof}

The next result describes the case when each clique of $\Delta$ is a simplex.

\begin{Theorem}
\label{planar}
Let $\Delta$ be a  simplicial complex with graph $G_\Delta$ such that each clique of $\Delta$ is a simplex. Then the following holds:
\begin{enumerate}
\item[(a)] If $\Delta$ is closed, then $J_\Delta$ is  generated by a regular sequence;

\item[(b)] Given a graph $G$ and an integer $m\geq |V(G)|$, there exists a closed simplicial complex $\Delta$ with $G_\Delta=G$ such that each clique of $\Delta$ is a simplex of dimension $m-1$;

\item[(c)] $\Delta$ is closed, if $\dim \Delta+1$ is greater than or equal to the number of facets of $\Delta$.
\end{enumerate}
\end{Theorem}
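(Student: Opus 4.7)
\textbf{Plan for Theorem~\ref{planar}.}

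For part (a), each clique $\Delta_i$ is an $(m-1)$-simplex and so contributes the single facet $F_i = V(\Delta_i)$; hence $J_\Delta = (\mu_{F_1},\ldots,\mu_{F_r})$ is generated by $r$ minors. By Corollary~\ref{cm}, $S/J_\Delta$ is Cohen--Macaulay with $\height J_\Delta = \sum_\ell (n_\ell - m + 1) = r$. In a Cohen--Macaulay ring, an ideal of height $h$ generated by $h$ elements is generated by a regular sequence; hence $\mu_{F_1},\ldots,\mu_{F_r}$ is a regular sequence.

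Part (b) is reduced to (c) by an explicit construction. Given $G$ with $V(G) = \{v_1,\ldots,v_r\}$ and $r \leq m$, take $r$ abstract $(m-1)$-simplices $\Delta_1,\ldots,\Delta_r$; for each edge $\{v_i,v_j\} \in E(G)$ glue one vertex of $\Delta_i$ to one vertex of $\Delta_j$, choosing the gluings to be pairwise disjoint. The resulting $\Delta = \Delta_1 \cup \cdots \cup \Delta_r$ satisfies $G_\Delta = G$, has exactly $r \leq m = \dim\Delta + 1$ facets, satisfies (i) and (ii), and each of its cliques is a simplex of dimension $m-1$. Part (c) then supplies the required closed labeling.

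For part (c), assume $r \leq m$. The plan is to prescribe, for each clique $\Delta_i$, the intended rank $\pi_i : V(\Delta_i) \to [m]$ of each of its vertices in the eventual sorted order, and then realize the prescription by a topological sort. Set $\pi_i(x) = j$ whenever $x \in V(\Delta_i) \cap V(\Delta_j)$ (well defined by (ii)), and extend $\pi_i$ to the private vertices so that it becomes a bijection onto $[m]$; this is possible since $\deg_{G_\Delta}(v_i) \leq r-1 \leq m-1$. Define the binary relation $x \prec y$ on $V(\Delta)$ by $\pi_i(x) < \pi_i(y)$ for some clique $\Delta_i$ containing both. Any linear extension of $\prec$ produces integer labels whose induced sorted order on each $V(\Delta_i)$ agrees with $\pi_i$. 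Closedness then follows: if the $p$-th entries of the sorted facets of $\Delta_i$ and $\Delta_j$ coincided (with $i \neq j$), their common value $x$ would lie in $V(\Delta_i) \cap V(\Delta_j)$ and satisfy $\pi_i(x) = p = \pi_j(x)$, contradicting the design $\pi_i(x) = j \neq i = \pi_j(x)$.

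The crux of the argument is acyclicity of $\prec$. I would take a shortest $\prec$-cycle with distinct vertices; by the linear order property of each $\pi_i$ it must involve at least two cliques. Partition the cycle into maximal excursions inside single cliques and let $\tilde{c}_0,\ldots,\tilde{c}_{K-1}$ (with $K \geq 2$ and $\tilde{c}_l \neq \tilde{c}_{l+1}$) be the cyclic sequence of their cliques. A transition vertex between two consecutive excursions lies in the intersection of the two cliques and, by (ii), in no third clique; consequently the excursion inside $\tilde{c}_l$ has entry vertex of $\pi_{\tilde{c}_l}$-value $\tilde{c}_{l-1}$ and exit vertex of $\pi_{\tilde{c}_l}$-value $\tilde{c}_{l+1}$, and strict monotonicity of $\pi_{\tilde{c}_l}$ along the excursion forces $\tilde{c}_{l-1} < \tilde{c}_{l+1}$ for every $l \in \ZZ/K\ZZ$. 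Iterating this in steps of two around the cycle produces a strictly increasing chain of integers that closes back on itself --- impossible for any $K \geq 2$. Hence $\prec$ is acyclic, completing the proof.
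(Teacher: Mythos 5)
Your proposal is correct, but it organizes the proof in essentially the opposite direction from the paper, so a comparison is worthwhile. For (a) the paper argues directly: closedness makes the initial terms $\ini_<(f_1),\dots,\ini_<(f_r)$ pairwise coprime, which immediately gives a regular sequence; you instead invoke Corollary~\ref{cm} (Cohen--Macaulayness and $\height J_\Delta=\sum_\ell(n_\ell-m+1)=r$) and the standard fact that in a Cohen--Macaulay ring an ideal of height $h$ generated by $h$ elements is a complete intersection. Both are valid; the paper's is more self-contained, yours leans on the earlier corollary. For (b) and (c) the paper proves (b) first, by induction on $|V(G)|$: it builds labeled simplices for the induced subgraph $G'=G\setminus\{v\}$, appends fresh large labels $a_w$ to each $\Delta'_w$, and lets $\Delta_v$ use the labels $a_{w_1},\dots,a_{w_r}$ of the neighbors of $v$ plus new large labels; (c) is then deduced from (b) via the observation that $\Delta$ is determined up to isomorphism by $\dim\Delta$ and $G_\Delta$. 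You reverse this: you prove (c) directly by prescribing a position function $\pi_i:V(\Delta_i)\to[m]$ with $\pi_i(x)=j$ on $V(\Delta_i)\cap V(\Delta_j)$, showing the induced relation $\prec$ is acyclic, and topologically sorting; (b) then follows by an obvious abstract gluing plus (c). Your acyclicity argument is sound --- the key points all check out: conditions (i) and (ii) make the witnessing clique of each relation and the value $\pi_i$ at a transition vertex well defined, each excursion forces $\tilde c_{l-1}<\tilde c_{l+1}$, and stepping by two yields a closed strictly increasing integer chain even in the degenerate case $K=2$. What your route buys is a non-inductive, explicitly combinatorial certificate of closedness (and it makes transparent exactly where $r\leq m$ is used, namely to extend each $\pi_i$ to a bijection onto $[m]$); what the paper's route buys is brevity, at the cost of the slightly glossed uniqueness-up-to-isomorphism step in (c).
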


\begin{proof}
(a) Let $\Delta=\Delta_1\cup\cdots\cup \Delta_r$ be the clique decomposition of $\Delta$. Since each clique is a simplex, it follows that $J_{\Delta_i}=(f_i)$ for all $i$,  where $f_i$ is a suitable $m$-minor, and $J_\Delta=(f_1,\ldots,f_r)$. Since $\Delta$ is closed the monomials $\ini_<(f_1), \ldots, \ini_<(f_r)$ are pairwise relatively prime. This implies that $f_1,\ldots,f_r$ is a regular sequence.

(b) We first assume that $m=|V(G)|$, and prove  in this case the assertion by induction on the number of vertices of $G$. The induction beginning is trivial. Now   assume that $|G|>1$, and choose a vertex $v$ of $G$. Let $G'$  be the induced subgraph on the vertices $V(G)\setminus \{v\}$. By induction there exists, for each $w\in V(G')$, a labeled simplex $\Delta'_w$  with $\dim \Delta'_w+1=  |V(G')|=|V(G)|-1$ such that the simplicial complex $\Delta'$ with clique decomposition $\Union_{w\in V(G')}\Delta'_w$ is closed and $G_{\Delta'}=G'$. We define new simplices $\Delta_w=\Delta'_w\union\{a_w\}$  where the labels $a_w$ are pairwise distinct and are bigger than all labels of $\Delta'$.

Let $w_1,\ldots, w_r$ be the neighbors of $v$ in $G$. Then we let $\Delta_v$ be the simplex whose  vertices are labeled by the  integers $a_{w_1},\ldots, a_{w_r}$ together with $|V(G)|-r$ numbers which are all bigger than all labels used in the construction so far.

Now let $m>|V(G)|$, and let $\Gamma$ be the closed  simplicial complex with $\dim  \Gamma= |V(G)|-1$, that we just have constructed. For each labeled simplex $\Gamma_i$ of dimension $|V(G)|-1$  of $\Gamma$ we define the new labeled simplex $\Delta_i=\Gamma_i\union\{b_{i1},\ldots, b_{is}\}$,  where $s=m-|V(G)|$ and where the numbers $b_{ij}$  are pairwise distinct and bigger than all labels of $\Gamma$. The simplicial complex $\Delta$ with facets $\Delta_i$ has the desired properties.

(c) Let  $\Delta$  be a simplicial  complex with graph $G_\Delta$ such that each clique of $\Delta$ is a simplex. Then, up to an isomorphism,  $\Delta$  is uniquely determined by $\dim \Delta$ and $G_\Delta$. Thus (b) is a simple consequence of (b).
\end{proof}

\begin{Corollary}
\label{complete}
Let $\Delta$ be a simplicial complex with graph  $G_\Delta$ such that each clique of $\Delta$ is a simplex of dimension $m-1$. Suppose that $G_\Delta$ is the  complete graph $K_r$.
Then  $\Delta$ is closed if and only if $m\geq r$.
\end{Corollary}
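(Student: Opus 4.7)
The plan is to handle the two implications separately. For the easy direction, assume $m \geq r$. Since each clique of $\Delta$ is an $(m-1)$-simplex, and by condition (i) no facet can lie in two different cliques, $\Delta$ has exactly $r$ facets, while $\dim\Delta+1 = m \geq r$. Theorem~\ref{planar}(c) then directly yields that $\Delta$ is closed.

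For the converse, suppose $\Delta$ is closed. By Lemma~\ref{order}, the degree of each vertex $v_i$ of $G_\Delta$ is at most $\min\{|V(\Delta_i)|,\, 2\dim\Delta\} = \min\{m,\, 2(m-1)\} = m$ when $m \geq 2$. Since $G_\Delta = K_r$ has every degree equal to $r-1$, this yields $r \leq m+1$, and the real work is to rule out the borderline case $r = m+1$.

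Suppose therefore that $r = m+1$. Then $\Delta_i$ intersects each of the other $m$ cliques in a single vertex (by (i)), and by (ii) these $m$ intersection vertices are pairwise distinct, exhausting $V(\Delta_i)$. Thus every vertex of every clique lies in exactly two cliques. Fix a labeling with respect to which $\Delta$ is closed, let $v$ be the vertex carrying the smallest label, and say $v \in V(\Delta_i) \cap V(\Delta_j)$. Because $v$ is globally smallest, it occupies position $1$ in both facets $F_i = V(\Delta_i)$ and $F_j = V(\Delta_j)$. Closedness then forces every $m$-subset of $F_i \cup F_j$ (a set of cardinality $2m-1$) to be a facet of $\Delta$, yielding at least $\binom{2m-1}{m}$ facets.

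The main obstacle is to turn this count into a genuine contradiction, since $\Delta$ has only $m+1$ facets. For $m \geq 3$ one simply observes that $\binom{2m-1}{m} \geq \binom{5}{3} = 10 > m+1$, closing the argument. The only remaining situation is $m = 2,\, r = 3$, and here the hypothesis is already vacuous: three edges pairwise sharing one vertex without a common intersection form a triangle, so the $2$-simplex on its three vertices belongs to $\Sc$, and the clique decomposition of $\Delta$ consists of a single $2$-dimensional clique rather than three $1$-simplices, contradicting the standing hypothesis on the cliques.
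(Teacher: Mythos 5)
Your proof is correct and follows essentially the same route as the paper: the direction $m\geq r$ via Theorem~\ref{planar}(c), the bound $r\leq m+1$ from the degree count, and the exclusion of the borderline case $r=m+1$ using the fact that then every vertex of every clique is an intersection point. Your treatment of that borderline case (the globally smallest label sitting in position $1$ of two facets) is a welcome expansion of the paper's terse remark that ``$\Delta$ has no free vertex, and hence cannot be closed,'' and your separate disposal of $m=2$ is harmless since that case is excluded by the section's standing hypothesis $\dim\Delta\geq 2$ anyway.
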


\begin{proof}
Each vertex of $K_r$ has order $r-1$. Therefore $m\geq r-1$, otherwise we could not associate the  graph $G_\Delta$ to $\Delta$. If $m=r-1$, then $\Delta$ has no free vertex, and hence $\Delta$ cannot be closed. On the other hand, if $m\geq r$, the assertion follows from Theorem~\ref{planar}.
\end{proof}

{}

\end{document}